\newcommand{\mmax}{\mathfrak{M}_2}
\newcommand{\mints}{\mathfrak{O}_2}
\newcommand{\kints}{\mathfrak{O}_0}
\newcommand{\kmax}{\mathfrak{M}_0}
\newcommand{\asscorder}{\mathfrak{A}_{K_2/K_0}}
\newcommand{\bigfield}{K_2}
\newcommand{\midfield}{K_1}
\newcommand{\littlefield}{K_0}
\newcommand{\ek}{e_{0}}
\newcommand{\pk}{\pi_{0}}
\newcommand{\pim}{\pi_2}
\newcommand{\el}{e_1}
\newcommand{\rz}{\rho_{0}}
\newtheorem{theorem}{Theorem}[section]
\newtheorem{proposition}[theorem]{Proposition}
\newtheorem{lemma}[theorem]{Lemma}
\newtheorem{corollary}[theorem]{Corollary}
\newtheorem{choice}[theorem]{Choice}
\title{Galois Scaffolds and Galois Module Structure for Totally Ramified $C_{p^2}$-extensions
in Characteristic $0$}
\author{Kevin Keating \\
Department of Mathematics \\
University of Florida \\
Gainesville, FL 32611 \\
USA \\[.2cm]
{\tt keating@ufl.edu}
\and 
Paul Schwartz \\
Department of Mathematics \\
University of Florida \\
Gainesville, FL 32611 \\
USA \\[.2cm] 
{\tt paulschwartz@ufl.edu}}
\begin{document}

\maketitle 

\begin{abstract}
Recently, much work has been done to investigate
Galois module structure of local field extensions, particularly through the use of
Galois scaffolds. Given a totally ramified
$p$-extension of local fields $L/K$, a Galois
Scaffold gives us a $K$-basis for $K[G]$ whose effect on the
valuation of elements of $L$ is easy to determine.\\ 
\indent In 2013, N.P. Byott and G.G. Elder
gave sufficient conditions for the existence of Galois
scaffolds for cyclic extensions of degree $p^2$ in
characteristic $p$. We take their work and adapt it to 
cyclic extensions of degree $p^2$ in characteristic $0$.
\end{abstract}
\section{Introduction}
When studying the Galois module structure for a Galois extension
of local fields $L/K$, a useful tool one has is,
as described in [BCE],
a $K$-basis 
for $K[G]$ ($G=Gal(L/K)$) whose effect on the valuation of
elements of $L$ is easy to determine. 
This in essence is a Galois scaffold. In [BE13] Byott and Elder 
gave sufficient conditions for the existence a of Galois scaffold
for totally ramified extensions of degree $p^2$ of local fields
of characteristic $p$. Given an extension $L/K$, which satisfies their assumptions,  
the lower ramification numbers $b_1,b_2$ fall into one 
residue class modulo $p^2$ represented by $0\leq b<p^2$. They conclude that
$\mathfrak{O}_L $ is free over its associated order $\mathfrak{A}_{L/K}$ if and only if $b\mid p^2-1$.
Furthermore, if $\mathfrak{O}_L$ is free over $\mathfrak{A}_{L/K}$
then any element $\rho\in L$ with $v_L(\rho)=b$ satisfies 
$\mathfrak{O}_L=\mathfrak{A}_{L/K}\cdot\rho $.
We translate their work into the setting of characteristic $0$. Thanks are due to 
Griff Elder for referring us to the work of Vostokov and Zhukov on Artin-Schrierer-Witt extensions in characteristic $0$.
\subsection{Local Fields and Ramification}
Given a local field $K$ we let $v_K:K \to \mathbb{Z}\cup\{\infty\}$
be the normalized valuation on $K$ (this will always mean $v_K(0)=\infty)$. The \textit{ring of integers} of $K$
is $\mathfrak{O}_K=\{x\in K:v_K(x)\geq 0\}$ and the
unique maximal ideal of $\mathfrak{O}_K$ is 
$\mathfrak{M}_K=\{x\in K:v_K(x)\geq 1 \} $. We denote by 
$\pi_K$ a \textit{uniformizer} for $K$. This is an element that satisfies
$v_K(\pi_K)=1$. Hence $\mathfrak{M}_K=(\pi_K)$. We let
$e_K=v_K(p)$ where $p$ is the characteristic of the \textit{residue field}
$\mathfrak{O}_K/\mathfrak{M}_K$ of $K$. We define the \textit{Artin-Schrierer
map} $\wp:K\to K$ by $\wp(x)=x^p-x$ and we let $K^{alg}$ denote an algebraic closure of $K$. 

Let $K_n/\littlefield$ be a totally ramified Galois
extension of degree $p^n$. Let $G=Gal(K_n/\littlefield)$ 
and let $v_n:K_n\to\mathbb{Z}\cup\{\infty\}$ be the normalized valuation
on $K_n$. 
For each integer $i$ let $G_i=\{\sigma\in G:v_n((\sigma-1)\pi_n\geq i+1 \}$. 
Observe that $G_{i+1}\subseteq G_i $ and $G_{0}=G$.
We say that
$G_i$ is the $i$th group in the (lower)
ramification filtration of $G$. 
It is known that $G_{i}$ is a normal subgroup of 
$G$ for each $i$ and each quotient $G_i/G_{i+1}$ is elementary abelian.
So we may choose a composition
series $\{1\}=H_n\subseteq H_{n-1}\subseteq...
\subseteq H_1\subseteq H_0=G $ that refines the ramification filtration with $H_i/H_{i+1}\cong C_p$. For each
$1\leq i\leq n$ choose $\sigma_i\in H_{i-1}\setminus H_i $, let
$K_i=K_n^{H_i} $ be the fixed field of $H_i$.
We let $v_i$, $\mathfrak{O}_i$, and
$\mathfrak{M}_i$ denote the
normalized valuation, ring of integers,
and unique maximal ideal for $K_i$ respectively.
Additionally $e_i=e_{K_i}$ and $\pi_i=\pi_{K_i}$ for
$0\leq i\leq n $.

Let $b_i=
v_n((\sigma_i-1)\pi_n)-1$. This gives us a non-decreasing list of integers 
$b_1\leq b_2\leq...\leq b_n$ which are independent of the choices made. Note
that $b_i$
is the called the \textit{$i$th} (lower) \textit{ramification number}. Note that
$b_{j+1},...,b_n$ are the lower ramification
numbers for $K_n/K_j$ and $b_1,...,b_j$ are the
lower ramification numbers for $K_j/K$ [BE18, pg. 101].

We define the \textit{upper ramification numbers} 
$u_1=b_1 $ and $$u_{i}=u_{i-1}+\frac{b_i-b_{i-1}}{p^{i-1}} $$ for $2\leq i\leq n$.
Note that $u_1,...,u_j $ are the upper ramification
numbers for $K_j/K$ but $u_{j+1},...,u_n $
are not necessarily the upper ramification numbers
for $K_n/K_j$ [BE18, pg. 101].
Suppose $E/K$ is a Galois $p$-extension and $F/K$ is a Galois
subextension.
It is well known that if $u$ is an upper ramification number for
$F/K$ then $u$ is also an upper ramification number for
$E/K$ as well.

\subsection{Depth of Ramification}
We have the following definition from Hyodo [H].
Let $L$ be a finite extension of $K$.
For finite $M/L$, and $F\in \{M,L,K\}$ 
define the depth of ramification (with respect to $F)$ by
$$d_F(M/L):=
\inf\{v_F\left(Tr_{M/L}(y)\right/y):y\in M\setminus\{0\} \}. $$
It is elementary to see that $d_F(M/L)\geq 0 $.
Hyodo points out that 
\begin{equation}\label{different}
d_F(M/L)=v_F(\mathfrak{D}_{M/L})-v_F(\pi_L)+v_F(\pi_M) 
\end{equation}
where $\mathfrak{D}_{M/L}$
is the different for $M/L$.
So if $M/K$ is a totally ramified $C_{p^2}$ extension we see that
$$d_M(M/K)=(p-1)b_2+p(p-1)b_1. $$
It follows from (\ref{different}) that
\begin{center}
$d_F(M/L)=d_F(M/N)+d_F(N/L)$
\end{center}
for any intermediate field $N$.
\subsection{Galois Scaffolds for $C_{p^2}$-extensions}
Let $\littlefield$ be a local field whose residue field has characteristic
$p$. Let $\bigfield/\littlefield$ be a totally ramified $C_{p^2}$-extension such that the lower ramification numbers
$b_1$ and $b_2$ are relatively prime to $p$ and fall into
one residue class modulo $p^2$ represented by $0< b<p^2$.
Let $G=Gal(\bigfield/\littlefield)\cong C_{p^2}$ and let $\midfield/\littlefield$ be the unique $C_p$-subextension.
Set $\mathbb{S}_{p^2}=\{0,1,...,p^2-1 \}$ and define a function $\mathfrak{a}:\mathbb{Z}\to \mathbb{S}_{p^2} $
by $\mathfrak{a}(j)\equiv jb_2^{-1} \mod p^2.$ For $0\leq i\leq 1$
let $\mathfrak{a}(j)_{(i)} $ denote the $i$-th digit in the 
$p$-adic expansion of $\mathfrak{a}(j) $.

Given an integer $\mathfrak{c}\geq 1$,  
two things are required for a \textit{Galois
scaffold of precision $\mathfrak{c}$}
[BCE, Definition 2.3]:\\
1. For each $t\in\mathbb{Z}$ an element $\lambda_t\in \bigfield $
such that $v_2(\lambda_t)=t $ and $\lambda_s\lambda_t^{-1}\in \littlefield$
whenever $s\equiv t\mod p^2.$\\
2. Elements $\Psi_1,\Psi_2$ in the augmentation ideal
$(\sigma-1:\sigma\in G) $ of $\littlefield[G]$  such that for each
$1\leq i\leq 2$ and $t\in\mathbb{Z}$
$$\Psi_i\lambda_t\equiv \left\{\begin{array}{ccc}
u_{i,t}\lambda_{t+p^{2-i}b_i} & \mod 
\lambda_{t+p^{2-i}b_i}\mmax^\mathfrak{c} & 
\mbox{if } \mathfrak{a}(t)_{(2-i)}\geq 1\\
0 & \mod \lambda_{t+p^{2-i}b_i}\mmax^\mathfrak{c}
& \mbox{if } \mathfrak{a}(t)_{(2-i)}=0
\end{array}\right. $$
where $u_{i,t}\in K$ and $v_K(u_{i,t})=0$.
\section{Witt Vectors in Characteristic 0}
\subsection{Witt Vectors of length 2}
A thorough treatment of the Witt Ring is given in
chapter 1 of [FV]. Here we shall state the relevant
information.

Let $B$ be a commutative ring with unity. 
Let $$S_1(X_1,Y_1)=X_1+Y_1 $$
$$S_2(X_1,X_2,Y_1,Y_2)=X_2+Y_2+
\frac{X_1^p+Y_1^p-(X_1+Y_1)^p}{p}. $$
Let the \textit{Witt vectors of length 2 over $B$} be the set
$W_2(B)=B\times B$ with addition defined by
$$(a_1,a_2)\oplus (b_1,b_2)=
(S_1(a_1,b_1),S_2(a_1,a_2,b_1,b_2)). $$
Define the \textit{Frobenius map} $\mathbf{F}:W(B)\to W(B) $
by $\mathbf{F}(a_1,a_2)=(a_1^p,a_2^p). $ 
The map $\wp=\mathbf{F}-id $ (Witt vector subtraction) is called the
\textit{Artin-Schrier} operator.
Let $$D(X,Y):=\frac{X^p+Y^p-(X+Y)^p}{p}=
-\sum_{i=1}^{p-1}\frac{\binom{p}{i}}{p}X^{p-i}Y^i\in\mathbb{Z}[X,Y]. $$
Observe that $S_2(X_1,X_2,Y_1,Y_2)=X_2+Y_2+D(X_1,Y_1).$

\subsection{Cyclic extensions of degree $p^2$}
From here onward, $\littlefield$ is assumed to be a local field of characteristic $0$
with residue characteristic $p$. Fix an algebriac closure $\littlefield^{alg}$
of $\littlefield$.
We will use the following to help build our scaffold.
\begin{theorem}[VZ, Proposition 3.2]\label{russians} 
Let $a_1\in \littlefield$, $-\frac{p}{p^2-1}\ek<v_0(a_1)\leq 0$. 
Also let $a_2\in \littlefield$ with $v_0(a_1)+v_0(a_2)>-\frac{p}{p-1}\ek $.
Put $\bigfield=\littlefield(x_1,x_2)$
for $\wp(x_1,x_2)=(a_1,a_2)$ where $\wp:
W_2(\littlefield^{alg})\to W_2(\littlefield^{alg})$ is the Artin-Schrier operator. 
Then, if $x_1\notin \littlefield$, $\bigfield/\littlefield$ is a cyclic 
extension of degree $p^2$ and
\begin{center} 
$d_{\littlefield}(\bigfield/\littlefield)<\frac{p^2+1}{p^2+p}\ek. $
\end{center}
\end{theorem}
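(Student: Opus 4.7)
The plan is to build $K_2/K_0$ as a tower of two Artin-Schreier extensions and bound the depth of ramification at each level. First, for the bottom layer $K_1 := K_0(x_1)$: since $x_1$ satisfies $x_1^p - x_1 = a_1$ and lies outside $K_0$ by hypothesis, the polynomial $X^p - X - a_1$ is irreducible and $K_1/K_0$ is cyclic of degree $p$. The upper bound $v_0(a_1) \leq 0$ together with the lower bound $v_0(a_1) > -\tfrac{p}{p^2-1}e_0$ keep us well inside the Artin-Schreier range in mixed characteristic, strictly below the threshold $-\tfrac{p}{p-1}e_0$ at which the behavior of $x^p - x = a$ changes character. Consequently $K_1/K_0$ is totally ramified, and both $v_0(\pi_1)$ and $v_0(\mathfrak{D}_{K_1/K_0})$ can be computed explicitly from $v_0(a_1)$; feeding these into formula~(\ref{different}) produces a sharp bound on $d_{K_0}(K_1/K_0)$.

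Next I would extract the equation satisfied by $x_2$ over $K_1$ by unfolding $\wp(x_1,x_2) = (a_1,a_2)$ with the Witt addition formulas from the previous section. A short manipulation yields
\[
x_2^p - x_2 \;=\; a_2 + D(a_1, x_1),
\]
an Artin-Schreier equation over $K_1$, where $D(X,Y) = \frac{X^p + Y^p - (X+Y)^p}{p}$. The key valuation computation is then to bound $v_1(D(a_1,x_1))$ using the expansion $D(a_1,x_1) = -\sum_{i=1}^{p-1}\tfrac{\binom{p}{i}}{p}\, a_1^{p-i} x_1^i$ together with $v_1(x_1) = v_1(a_1)/p = v_0(a_1)$; each coefficient $\tfrac{\binom{p}{i}}{p}$ is a $p$-adic unit, so only the monomial valuations enter. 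The hypothesis $v_0(a_1)+v_0(a_2) > -\tfrac{p}{p-1}e_0$ is calibrated precisely so that $a_2 + D(a_1,x_1)$ still lies inside the Artin-Schreier regime over $K_1$, forcing $K_2/K_1$ to be a totally ramified cyclic extension of degree $p$ and giving an analogous estimate for $d_{K_0}(K_2/K_1)$. The $C_{p^2}$ (rather than $C_p \times C_p$) structure of $\mathrm{Gal}(K_2/K_0)$ follows automatically from Witt vector Galois theory: the Galois action on $(x_1,x_2)$ is by Witt translation by a $\wp$-torsion vector in $W_2(\mathbb{F}_p) \cong \mathbb{Z}/p^2$, which has order $p^2$ exactly when its first coordinate is nonzero, i.e.\ exactly when $x_1 \notin K_0$.

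Finally, combining the two depth bounds through the additivity formula $d_{K_0}(K_2/K_0) = d_{K_0}(K_2/K_1) + d_{K_0}(K_1/K_0)$ from the excerpt, and checking that the total stays strictly below $\tfrac{p^2+1}{p^2+p}e_0$, completes the proof. I expect the main obstacle to be the valuation estimate for $D(a_1,x_1)$: the worst monomial, occurring at $i=1$ with $v_1$-valuation $(p^2-p+1)v_0(a_1)$, must be shown not to push $a_2 + D(a_1,x_1)$ out of the Artin-Schreier regime over $K_1$. The two hypotheses on $v_0(a_1)$ and on $v_0(a_1)+v_0(a_2)$ are designed precisely to prevent this, and the numerical bookkeeping needed to translate their meaning at the level of $K_1$ into the final depth inequality measured over $K_0$ is where the real work lies.
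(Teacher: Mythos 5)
You should note at the outset that the paper does not prove this statement at all: it is imported verbatim as [VZ, Proposition 3.2], so there is no internal proof to compare against, and your outline has to stand on its own. The tower decomposition, the equation $x_2^p-x_2=a_2+D(x_1,a_1)$ for the top layer, and the identification of the worst monomial of $D(x_1,a_1)$ at $i=1$ with valuation $(p^2-p+1)v_0(a_1)$ are all correct and match computations the paper performs later for its special choice of $a_2$. The first genuine gap is the cyclicity. Your claim that the $C_{p^2}$-structure ``follows automatically from Witt vector Galois theory,'' with Galois acting by translation by a $\wp$-torsion vector of order $p^2$, is a characteristic-$p$ argument that does not survive the passage to characteristic $0$: here $\mathbf{F}(a_1,a_2)=(a_1^p,a_2^p)$ is not additive on $W_2(K_0^{alg})$, so $\wp=\mathbf{F}-\mathrm{id}$ is not a group homomorphism, the solution set of $\wp(\vec{X})=\vec{a}$ is not a torsor under $W_2(\mathbb{F}_p)$, and it is not even clear a priori that $K_2/K_0$ is Galois. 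Showing that the conjugates of $(x_1,x_2)$ are only \emph{approximately} of the form $(x_1,x_2)\oplus(i,j)$, yet close enough to force the group to be cyclic of order $p^2$, is precisely where the valuation hypotheses are consumed; it is the content of the Hensel's-lemma perturbation arguments this paper carries out in Propositions 3.2 and 3.4 for its special case, and it is the heart of the theorem rather than a formality.

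The second gap is the depth bound. Your bookkeeping treats $-v_1(a_2+D(x_1,a_1))$ as the ramification break of $K_2/K_1$, but $v_1(a_2)=p\,v_0(a_2)$ is divisible by $p$ whenever $K_1/K_0$ is totally ramified, so [MW, Theorem 5] cannot be applied until $a_2+D(x_1,a_1)$ has been reduced modulo $\wp(K_1)$, and that reduction can lower the break substantially: in the paper's own construction the true break is $b_2=p^2m+b_1$, strictly smaller than $p(-v_0(a_2))=p^2m+pb_1$. If you skip the reduction and just add the naive estimates through $d_{K_0}(K_2/K_0)=d_{K_0}(K_2/K_1)+d_{K_0}(K_1/K_0)$, you get $d_{K_0}(K_2/K_0)<\frac{p-1}{p}\left(-v_0(a_1)-v_0(a_2)\right)<e_0$, and when $-v_0(a_1)-v_0(a_2)$ lies between $\frac{p^2+1}{p^2-1}e_0$ and $\frac{p^2+p}{p^2-1}e_0$ this exceeds the claimed bound $\frac{p^2+1}{p^2+p}e_0$. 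So the stated inequality cannot follow from your estimates; a correct proof must quantify how much of $a_2+D(x_1,a_1)$ is absorbed into $\wp(K_1)$, which is where the real work in [VZ] lies. (A smaller issue: the hypotheses allow $v_0(a_1)=0$ or $p\mid v_0(a_1)$, in which case $K_1/K_0$ need not be totally ramified and your formulas $v_1(x_1)=v_0(a_1)$ and $v_1(a_1)=pv_0(a_1)$ break down.)
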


\section{Building The Scaffold}
Here we use Witt vectors to construct totally ramified $C_{p^2}$-extensions 
which possesses a Galois scaffold.\\ 
\begin{choice}\label{first choice} Choose $a_1\in \littlefield\setminus \wp(\littlefield)$ such that
$p\nmid v_0(a_1)$ and $-\frac{p}{p^2-1}\ek<v_0(a_1)<0$.
\end{choice}
\begin{choice}\label{second choice} Choose $\mu\in \littlefield$ such that $m:=-v_0(\mu)>0$ 
satisfies: 
\begin{alignat}{2}
\frac{p}{p-1}\ek & >pm-\left(2+\frac{1}{p(p-1)}\right)v_0(a_1) \label{first bound} \\
   p^2m & >-(p^2-1)v_0(a_1). \label{second bound}
\end{alignat}
\end{choice}
Set $a_2=\mu^pa_1$. Notice that (\ref{first bound}) tells us that
\begin{equation}\label{first bound equals}
\frac{p\ek}{p-1}>-v_0(a_2)-\left(1+\frac{1}{p(p-1)}\right)v_0(a_1).
\end{equation}

Choose $x_1,x_2\in \littlefield^{alg}$ such that
$\wp(x_1,x_2)=(a_1,a_2) $. That is to say
$x_1^p-x_1=a_1 $ and $x_2^p-x_2=a_2+D(x_1,a_1). $
Let 
$$\begin{array}{ccc}
    \midfield=\littlefield(x_1) & \quad\text{and}\quad & \bigfield=\littlefield(x_1,x_2). 
\end{array}$$
Observe that $K_1\neq \littlefield$ since
$a_1\in \littlefield\setminus \wp(\littlefield)$.
Let $K_2=\littlefield(x_1,x_2)$, it follows from Theorem \ref{russians}
that $K_2/\littlefield$ is a $C_{p^2}$-extension.
Since $-\frac{p}{p-1}\ek<v_0(a_1)<0$ and
$p\nmid v_0(a_1)$ it follows from [MW, Theorem 5] that
$\midfield/\littlefield$ is a totally ramified $C_p$-extension with
ramification number $u_1=b_1=-v_0(a_1)$.
The goal of this section is to show that 
$\bigfield/\littlefield$ has a Galois scaffold.

Set $$D_1:=D(x_1,a_1)=\frac{x_1^p+a_1^p-(x_1+a_1)^p}{p}=
-\sum_{i=1}^{p-1}\frac{\binom{p}{i}}{p}x_1^ia^{p-i} 
\in \midfield.$$
Since $$\min\{-ib_1-p(p-i)b_1:1\leq i\leq p-1 \}=-(p^2-p+1)b_1$$ 
we deduce that $$v_1(D_1)=-(p^2-p+1)b_1. $$

\begin{lemma}\label{upper breaks}
Let $E/L$ be a totally ramified $C_p\times C_p$-extension with
upper ramification numbers $u_1<u_2$. There is a unique $C_p$-subextension
$F/L$ of $E/L$ with ramification number $u_1$. All other $C_p$-subextensions of $E/L$ have ramification
number $u_2.$
\begin{proof}
Let $G=Gal(E/L)$. Consider the upper ramification filtration
$$G^x=\left\{\begin{array}{ccc}
G, & \mbox{if} & x\leq u_1\\
G^{u_2}, & \mbox{if} & u_1<x\leq u_2\\
\{1\}, & \mbox{if} & u_2<x.
\end{array}\right.$$
Let $H\leq G$ such that $|H|=p$. We see that
$$\left(G/H\right)^x=G^xH/H=
\left\{\begin{array}{ccc} 
G/H,& \mbox{if} & G^x\not\subseteq H\\
\{\overline{1}\}, & \mbox{if} & G^x\subseteq H.
\end{array}\right. $$
If $H=G^{u_2}$ then $G^x\leq H$ if and only if $G^x\leq G^{u_2}$,
which is equivalent to $x>u_1$. In this case $u_1$ is the upper
ramification number for $E^H/L$.
If $H\neq G^{u_2}$ then $G^x\leq H$ if and only if
$G^x=\{1\}$, which occurs exactly when $x>u_2$. Thus $u_2 $
is the upper ramification number for $E^H/L$.
Set $F:=E^{G^{u_2}}$. Then $F/L$
is the unique $C_p$-subextension of $E/L$
with ramification number $u_1$. 
\end{proof}
\end{lemma}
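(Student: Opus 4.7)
The plan is to reduce to the upper numbering filtration of $G=Gal(E/L)$ and apply Herbrand's compatibility of upper numbering with quotients, $(G/H)^x=G^xH/H$, for each subgroup $H\le G$ of order $p$. Since every totally ramified $C_p$-extension has a single ramification number which agrees in upper and lower numbering, classifying the ramification numbers of the $C_p$-subextensions $E^H/L$ is the same as reading off the unique upper break of each quotient $G/H$.

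First, I would pin down the upper filtration of $G$. Because $u_1<u_2$ are the only upper breaks and $|G|=p^2$, the filtration strictly decreases at $x=u_1$ and again at $x=u_2$, with $G^x=\{1\}$ for $x>u_2$. The intermediate piece $N:=G^{u_2}$ is therefore a subgroup of order exactly $p$: it cannot be $G$ (there is a break at $u_1$) and cannot be trivial (there is a break at $u_2$). Since $G\cong C_p\times C_p$ is elementary abelian, $N$ is one of the $p+1$ order-$p$ subgroups of $G$.

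Next, for an arbitrary order-$p$ subgroup $H\le G$, I would compute $(G/H)^x=G^xH/H$. This quotient is trivial exactly when $G^x\subseteq H$, and I would split on whether $H=N$ or $H\ne N$. If $H=N$, then $G^x\subseteq H$ precisely for $x>u_1$, so $u_1$ is the unique break of $E^H/L$. If $H\ne N$, then $N\not\subseteq H$ (two distinct subgroups of order $p$ in an elementary abelian group), so $G^x\subseteq H$ only when $x>u_2$, giving break $u_2$. Setting $F:=E^N$ produces the unique $C_p$-subextension with ramification number $u_1$, while the remaining $p$ subgroups of order $p$ yield extensions with ramification number $u_2$. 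There is no real obstacle in this argument; the one subtlety worth stating explicitly is that $N=G^{u_2}$ has order exactly $p$, which is forced by $u_1$ and $u_2$ being two genuinely distinct breaks of a filtration of a group of order $p^2$.
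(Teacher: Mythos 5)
Your proposal is correct and follows essentially the same route as the paper: apply Herbrand's theorem $(G/H)^x = G^xH/H$ to each order-$p$ subgroup $H$ and split on whether $H$ equals $G^{u_2}$. Your explicit remark that $G^{u_2}$ has order exactly $p$ is a small clarification the paper leaves implicit, but the argument is otherwise identical.
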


\begin{proposition}\label{breaks}
$K_2/\littlefield$ is a totally ramified extension
with lower ramification numbers $b_1=-v_0(a_1)$ and 
$b_2=p^2m+b_1$. The upper ramification numbers for $K_2/\littlefield$ are
$u_1=b_1 $ and $u_2=-v_0(a_2)=pm+b_1$.
\end{proposition}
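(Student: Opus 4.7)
The plan is to leverage Theorem \ref{russians} together with [MW, Theorem 5] to extract the ramification data of $\bigfield/\littlefield$ essentially by analyzing $\bigfield/\midfield$. Since $\midfield/\littlefield$ is already known to be totally ramified $C_p$ with $b_1 = u_1 = -v_0(a_1)$, and Theorem \ref{russians} tells us $\bigfield/\littlefield$ is $C_{p^2}$, the only thing left to pin down is the second lower ramification break $b_2$. By [BE18, pg.\ 101] applied to the tower $\bigfield/\midfield/\littlefield$, $b_2$ equals the ramification number of the $C_p$-extension $\bigfield/\midfield$. I would therefore rewrite $\bigfield = \midfield(x_2)$, where $x_2^p - x_2 = a_2 + D_1$, in ``generalized Artin--Schreier'' form by finding an element $c' \in a_2 + D_1 + \wp(\midfield)$ with $v_1(c')$ negative and coprime to $p$, then read off $b_2 = -v_1(c')$ via [MW, Theorem 5].

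The key substitution exploits $a_2 = \mu^p a_1 = \mu^p \wp(x_1)$. Expanding,
\[
\mu^p(x_1^p - x_1) = (\mu x_1)^p - \mu^p x_1 = \wp(\mu x_1) - (\mu^p - \mu)x_1,
\]
so $a_2 + D_1 \equiv D_1 - (\mu^p - \mu)x_1 \pmod{\wp(\midfield)}$. Using $v_1(\mu) = -pm$ and $v_1(x_1) = -b_1$, the linear term $-(\mu^p - \mu)x_1$ has valuation $-p^2m - b_1$, while the paper has already recorded $v_1(D_1) = -(p^2 - p + 1)b_1$. The bound (\ref{second bound}) gives $p^2 m > (p^2 - 1)b_1 > (p^2 - p)b_1$, which is exactly what is needed to force $-(\mu^p - \mu)x_1$ to strictly dominate $D_1$. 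Hence the reduced element has $v_1 = -p^2m - b_1$, and this integer is coprime to $p$ since $p \nmid b_1$ by Choice \ref{first choice}.

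To apply [MW, Theorem 5] to $\bigfield/\midfield$, I still need $-\frac{p}{p-1}\el < -p^2m - b_1$, which follows from (\ref{first bound}) after multiplying both sides by $p$ and using $\el = p\ek$. This yields $b_2 = p^2m + b_1$, and then $u_2 = u_1 + (b_2 - b_1)/p = b_1 + pm$; since $v_0(a_2) = pv_0(\mu) + v_0(a_1) = -pm - b_1$, this equals $-v_0(a_2)$ as claimed. The main obstacle I expect is the reduction step: one has to guess the right element ($\mu x_1$) to subtract off, and then verify, via both inequalities in Choice \ref{second choice}, that after reduction the resulting valuation sits in the narrow range required by [MW, Theorem 5] while remaining coprime to $p$. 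Once this is done, the rest is a direct application of the $C_p$ Artin--Schreier theory followed by the Herbrand formula $u_2 = u_1 + (b_2 - b_1)/p$.
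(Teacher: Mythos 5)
Your route is genuinely different from the paper's: the paper embeds $\bigfield$ into the compositum $E=MN$ of two auxiliary Artin--Schreier extensions, shows $\bigfield\subseteq E$ by a Hensel's lemma argument, computes all the ramification breaks of $E/\littlefield$, and then uses Lemma \ref{upper breaks} applied to $E/\midfield$ to isolate the break of $\bigfield/\midfield$. Your valuation arithmetic (the dominance of $-\wp(\mu)x_1$ over $D_1$ via (\ref{second bound}), the range check for [MW, Theorem 5] via (\ref{first bound}), and the Herbrand computation of $u_2$) all checks out. However, there is a genuine gap at the central step. The congruence $a_2+D_1\equiv D_1-\wp(\mu)x_1 \bmod \wp(\midfield)$ is a correct identity among elements of $\midfield$, but in characteristic $0$ it does \emph{not} follow that the two Artin--Schreier equations generate the same extension of $\midfield$: the map $x\mapsto x^p-x$ on the field is not additive, so the usual substitution ``replace $x_2$ by $x_2-\mu x_1$'' fails. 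Concretely, $y:=x_2-\mu x_1$ satisfies
$$y^p-y=\bigl(D_1-\wp(\mu)x_1\bigr)+\sum_{i=1}^{p-1}\binom{p}{i}x_2^{p-i}(-\mu x_1)^i,$$
and the error term is nonzero and lies in $\bigfield$ rather than $\midfield$. Hence [MW, Theorem 5] applies neither to $y$ as it stands, nor to the reduced equation $X^p-X=D_1-\wp(\mu)x_1$ until one has \emph{proved} that the latter generates $\bigfield$ over $\midfield$. That identification is precisely the characteristic-$0$ content that the paper's Hensel argument with $g(X)=f(X+z_1+z_2)$ supplies, and your proposal treats it as the characteristic-$p$ triviality.

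The gap is repairable along your lines, and the repaired argument is arguably more direct than the paper's. Working with the unique extension of $v_1$ to $\bigfield$, each term of the error has valuation at least $p\ek-p(pm+b_1)$, which exceeds $v_1(D_1-\wp(\mu)x_1)=-(p^2m+b_1)$ by $p\ek-(p-1)b_1>0$; hence $v(y^p-y)=-(p^2m+b_1)$, and since $p\nmid p^2m+b_1$ this already forces $\bigfield/\midfield$ to be totally ramified with $v_2(y)=-(p^2m+b_1)$. Then, rather than invoking [MW, Theorem 5] on the reduced equation, compute the break directly as $v_2((\sigma-1)y)-v_2(y)$: since $\mu x_1\in\midfield$ one has $(\sigma-1)y=(\sigma-1)x_2=1+\epsilon'$ with $v_2(\epsilon')>0$ by Lemma \ref{keatings lemma} applied to $x_2^p-x_2=a_2+D_1$ over $\midfield$ (the range hypothesis there is your same consequence of (\ref{first bound})), giving $b_2=p^2m+b_1$. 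As written, though, the proposal asserts the key identification of fields without justification, and that is where the real work lies.
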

\begin{proof}
Let $z_1,z_2\in \littlefield^{alg}$ satisfy $\wp(z_1)=a_2 $ and
$\wp(z_2)=D_1$. Let $N=\littlefield(z_1)$ and $M=\midfield(z_2)$. 
It follows from (\ref{first bound equals}) that $v_0(a_2)>-\frac{p\ek}{p-1}$. Since
$p\nmid v_0(a_2)$ it follows from [MW, Theorem 5] that $N/\littlefield$ is a totally ramified 
$C_p$-extension with ramification number $-v_0(a_2)$.
Likewise $p\nmid v_1(D_1)$ and
$-v_0(a_1)<\frac{p\ek}{p^2-1}<\frac{p^2\ek}{(p-1)(p^2-p+1)}$. Thus
$v_1(D_1)>-\frac{p\el}{p-1}$. Hence 
$M/\midfield$ is a totally ramified $C_p$-extension with ramification number
$-v_1(D_1)$. Additionally, $M=\littlefield(x_1,z_2)$
with $\wp(x_1,z_2)=(a_1,0) $ so it follows from
Theorem \ref{russians} that $M/\littlefield$ is a $C_{p^2}$-extension.
Now the lower ramification numbers for $M/\littlefield$ are
$-v_0(a_1)=b_1$ and $-v_1(D_1)=(p^2-p+1)b_1 $.
So the upper ramification numbers for $M/\littlefield$ are
$-v_0(a_1)$ and $-v_0(a_1)+p^{-1}(v_0(a_1)-v_1(D_1))=-p v_0(a_1) $.

Now $E:=MN$
is a totally ramified $C_{p^2}\times C_p$-extension of $\littlefield$ with upper ramification
numbers $-v_0(a_1),-pv_0(a_1)$ and $-v_0(a_2)$.
It follows from (\ref{second bound}) that $-p v_0(a_1)<-v_0(a_2)$
so the upper ramification numbers for $E/\littlefield$
are ordered by $u_1'=-v_0(a_1)$, $u_2'=-pv_K(a_1)$
and $u_3'=-v_0(a_2)$. Thus the lower ramification numbers for
$E/\littlefield $ are $b_1'=-v_0(a_1)$, $b_2'=-v_1(D_1)$, and
$b_3'=p^2m-v_0(a_1)$.
\begin{center}
\begin{tikzcd}[row sep=huge]
& E \ar[dl,-]\ar[d,-]\ar[dr,-] & & \\
K_2 \ar[dr,-] & M \ar[d,-] & N\midfield  \ar[dl,-] \ar[dr,-] & \\
& \midfield \ar[dr,-] & & N \ar[dl,-]\\
& & \littlefield &
\end{tikzcd}
\end{center}

Set $f(X)=X^p-X-a_2-D_1\in E[X]$. Let $$g(X)=f(X+z_1+z_2)=
X^p-X+\sum_{i=1}^{p-1}\binom{p}{i}(z_1+z_2)^{p-i}X^i+
\sum_{i=1}^{p-1}\binom{p}{i}z_1^{p-i}z_2^i. $$
Observe that $v_E(z_1)=p^2 v_0(a_2)$ and $v_E(z_2)=p v_1(D_1)$. It follows
from (\ref{first bound}) 
that $(p^2-p+1)b_1<p^2m+b_1 $ which implies that $v_E(z_1+z_2)=
p^2v_0(a_2)$. Now (\ref{first bound equals}) implies
\begin{align}
v_E\left(\binom{p}{i}z_1^{p-1}z_2\right) &
=
p^3e_K+(p-1)p^2v_K(a_2)+p(p^2-p+1)v_0(a_1) \nonumber \\
& =p^2(p-1)\left(\frac{p}{p-1}\ek+v_0(a_2)
+\left(1+\frac{1}{p(p-1)}\right)v_0(a_1)\right) \nonumber \\
& >0. \nonumber 
\end{align}
Now $g(X)\in \mathfrak{O}_E[X]$, $v_E(g(0))>0$ and $v_E(g'(0))=0$. Thus
we may apply Hensel's lemma and choose $\alpha\in \mathfrak{O}_E$ such that
$g(\alpha)=0$. This means that $\wp(z_1+z_2+\alpha)=a_2+D_1 $
and so $K_2\subseteq E$. Hence $K_2/\littlefield$ is totally ramified.

Now $E/\midfield$ is a totally ramified $C_p\times C_p$ extension 
with lower ramification numbers $-v_1(D_1)=(p^2-p+1)b_1 $ and $p^2m+b_1$.
Since $K_2\neq M$ and $-v_1(D_1) $ is the ramification number for $M/\midfield$
it follows from the previous lemma
that $p^2m+b_1 $ is the ramification number for 
$K_2/\midfield$ and thus the second (lower)
ramification number for $K_2/\littlefield$.
The second upper ramification number is 
$u_2=u_1+p^{-1}(u_2-u_1)=pm+b_1=-v_0(a_2)$.
\end{proof}
Observe that proposition \ref{lower breaks} implies that
assumption \ref{one residue class} is satisfied.
Additionally,
it follows from (\ref{second bound}) and Proposition \ref{breaks} that 
\begin{equation}\label{lower breaks}
    p^2b_1<b_2.
\end{equation}
We also note that in the proof of Proposition 3.2 we showed that
\begin{equation}\label{valuation of second generator}
   v_2(x_2)=\min\{-(p^2-p+1)b_1,-pb_1-p^2m\}=-pb_1-p^2m=-(p-1)b_1-b_2.
\end{equation}
Lastly, it follows from Theorem \ref{russians} and
(\ref{different}) that
\begin{equation}\label{depth of ramification}
d_{K_2}(K_2/\littlefield)=
(p-1)b_2+p(p-1)b_1<\left(\frac{p^2+1}{p^2+p}\right)(p^2\ek).
\end{equation}
The following is presumably well known:
\begin{lemma}\label{keatings lemma}
Let $F$ be a local field of characteristic
$0$. Let $\beta\in F\setminus \wp(F)$ such that $u=-v_F(\beta)$
satisfies $0<u<\frac{p}{p-1}e_F $. Let $\alpha$ be a root of $f(X)=X^p-X-\beta $ and set
$E=F(\alpha)$. If $f(X)$ is irreducible then $E/F$ is
a $C_p$-extension and there
is $\sigma\in Gal(E/F)$ such that $\sigma(\alpha)=\alpha+1+
\epsilon$, with $v_F(\epsilon)\geq e_F-(1-\frac{1}{p})u$.
Moreover, if $p$ does not divide $u$ then
$f(X)$ is irreducible and
$v_F(\epsilon)=e_F-(1-\frac{1}{p})u$.
\end{lemma}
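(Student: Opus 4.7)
The plan is to find $\sigma \in Gal(E/F)$ with $\sigma(\alpha) = \alpha + 1 + \epsilon$ by solving for the unknown $\epsilon \in \mathfrak{O}_E$ via Hensel's lemma. Expanding $f(\alpha + 1 + \epsilon) = 0$ and using $\alpha^p - \alpha = \beta$, one obtains the equation $h(\epsilon) = 0$ where
\[
h(\epsilon) := \wp(1+\epsilon) + \sum_{i=1}^{p-1}\binom{p}{i}\alpha^{p-i}(1+\epsilon)^i
\]
and $\wp(X) = X^p - X$.

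The Newton polygon of $f(X) = X^p - X - \beta$ is the single segment from $(0,-u)$ to $(p,0)$ of slope $u/p$, so every root of $f$ has $F$-valuation $-u/p$; when $f$ is irreducible this forces $E/F$ to be totally ramified of degree $p$ with $v_E(\alpha) = -u$. Since $\binom{p}{i}$ is divisible by $p$ but not by $p^2$ for $1 \leq i \leq p-1$, I compute $v_E\!\left(\binom{p}{i}\alpha^{p-i}\right) = pe_F - (p-i)u$, with the minimum attained uniquely at $i=1$; thus $v_E(h(0)) = pe_F - (p-1)u > 0$ by the hypothesis $u < pe_F/(p-1)$. Meanwhile $h'(0) \equiv p - 1 \pmod{\mathfrak{M}_E}$ is a unit, so Hensel's lemma yields $\epsilon \in \mathfrak{O}_E$ with $h(\epsilon) = 0$ and $v_E(\epsilon) \geq pe_F - (p-1)u$, hence $v_F(\epsilon) \geq e_F - (1 - 1/p)u$.

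With this $\epsilon$, the element $\alpha + 1 + \epsilon \in E$ is a second root of $f$, so the $F$-embedding $\sigma : E \to F^{alg}$ determined by $\sigma(\alpha) = \alpha + 1 + \epsilon$ satisfies $\sigma(E) = F(\sigma(\alpha)) \subseteq E$, forcing $\sigma(E) = E$ by dimension; since $[E:F] = p$ is prime and $\sigma \neq id$, this gives $Gal(E/F) = \langle \sigma \rangle \cong C_p$. For the \emph{moreover} clause, if $p \nmid u$ then the Newton polygon slope $u/p$ forces $f$ irreducible (any proper factor of degree $d$ with $1 \leq d \leq p-1$ would have constant term of $F$-valuation $-ud/p \notin \mathbb{Z}$, since $\gcd(u,p) = 1$); and the first Hensel iterate $\epsilon_1 = -h(0)/h'(0)$ has $v_E(\epsilon_1) = pe_F - (p-1)u$ exactly, with all later corrections of strictly greater valuation, so $v_F(\epsilon) = e_F - (1 - 1/p)u$ as an equality.

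The main obstacle is the valuation bookkeeping for $h(0)$, in particular verifying that the $i = 1$ summand $p\alpha^{p-1}$ strictly dominates all others; this rests on $u > 0$ together with the fact that $\binom{p}{i}$ is divisible by $p$ exactly once for $1 \leq i \leq p-1$, and the hypothesis $u < pe_F/(p-1)$ is precisely what makes $v_E(h(0)) > 0$ so that Hensel's lemma applies.
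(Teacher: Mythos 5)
Your argument is correct in substance, but it takes a genuinely different route to the equality $v_F(\epsilon)=e_F-(1-\tfrac{1}{p})u$. The paper obtains the lower bound exactly as you do, by applying Hensel's lemma to $f(X+\alpha)=X^p-X+\sum_{i=1}^{p-1}\binom{p}{i}\alpha^{p-i}X^i$; but for the matching upper bound it sums the conjugates $\sigma^i(\alpha)=\alpha+i+\epsilon+\sigma(\epsilon)+\cdots+\sigma^{i-1}(\epsilon)$ to get the identity $Tr_{E/F}(\epsilon)=-p$, and then invokes the depth of ramification $d_E(E/F)=(p-1)u$ to force $v_E(\epsilon)\leq pe_F-(p-1)u$. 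You instead observe that the constant term $h(0)=\sum_{i=1}^{p-1}\binom{p}{i}\alpha^{p-i}$ has its valuation attained uniquely at $i=1$ (the valuations $e_F-(p-i)u/p$ are strictly increasing in $i$ because $u>0$ and $v_p\bigl(\binom{p}{i}\bigr)=1$), so $v_F(h(0))=e_F-(p-1)u/p$ exactly, and the simple-root form of Hensel--Newton then pins down $v_F(\epsilon)=v_F(h(0)/h'(0))$ on the nose. Your version is more elementary (no trace identity, no depth of ramification) and in fact yields the equality without the hypothesis $p\nmid u$, which is slightly stronger than the lemma claims; the paper's trace argument has the virtue of reusing the depth-of-ramification machinery already set up elsewhere in the paper.

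One sentence needs repair: ``when $f$ is irreducible this forces $E/F$ to be totally ramified of degree $p$'' is false in general. If $p\mid u$ the Newton-polygon slope $u/p$ is an integer, and an irreducible $f$ can define an unramified (or less ramified) extension; so in the first part you may not write $v_E(\alpha)=-u$ or $v_E(p)=pe_F$. This is harmless for your proof: read all the estimates in $v_F$ extended to $E$, as the paper does, giving $v_F\bigl(\binom{p}{i}\alpha^{p-i}\bigr)=e_F-(p-i)u/p>0$, which is all Hensel needs, and the unique-minimum argument for $h(0)$ is likewise unaffected. The total-ramification claim is correct, and needed, only in the \emph{moreover} case $p\nmid u$, where your denominator argument (the slope $u/p$ in lowest terms forces $p\mid e(E/F)$) establishes both irreducibility and total ramification; that part is fine and is essentially the paper's own $e(E/F)=p$ computation recast via the Newton polygon.
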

\begin{proof}
Since $f(\alpha)=0$ we have
\begin{align}
f(X+\alpha) & =(X+\alpha)^p-(X+\alpha)-\beta \nonumber \\
& =X^p-X+\sum_{i=1}^{p-1}\binom{p}{i}\alpha^{p-i}X^i \nonumber 
\end{align}
with $$v_F\left(\binom{p}{i}\alpha^{p-i}\right)=
e_F-(p-i)p^{-1}u>0$$
for $1\leq i\leq p-1$. It follows from Hensel's Lemma that
$f(X)$ has $p$ roots $\alpha_0,\alpha_1,...,\alpha_{p-1}$
in $E$, with $\alpha_i\equiv \alpha+i\mod 
\mathfrak{M}_E^{pe_F-(p-1)u}$.
Hence $Gal(E/F)\cong C_p $. We have $\alpha_0=\alpha $
and $\alpha_1=\alpha+1+\epsilon $ for some 
$\epsilon\in E$ with $v_F(\epsilon)
\geq e_F-(1-\frac{1}{p})u $.

Now assume $p$ does not divide $u=-v_F(\beta)$.
Let $e(E/F) $ and $f(E/F)$ 
be the \textit{ramification index} and \textit{residue degree} of
$E/F$ respectively.
The equations $p v_E(\alpha)=v_E(\beta)=e(E/F)v_F(\beta) $ 
and $p=e(E/F)f(E/F) $
imply that $e(E/F)=p $. Hence
$E/F$ is totally ramified. 

Let $\sigma\in Gal(E/F)$ be such that $\sigma(\alpha)=
\alpha+1+\epsilon$. Then for $i\geq 1$ we have 
$$\sigma^i(\alpha)=\alpha+i+\epsilon+\sigma(\epsilon)+...+
\sigma^{i-1}(\epsilon).$$
In particular, we get $$\alpha=\sigma^p(\alpha)=\alpha+p+
Tr_{E/F}(\epsilon)$$ and $Tr_{E/F}(\epsilon)=-p$.
It follows from the definition of depth of ramification that
$$v_E(p^{-1}\epsilon)\leq -d_E(E/F)=
-(p-1)u. $$
Thus $v_F(\epsilon)\leq e_F-(1-\frac{1}{p})u$ and so
$v_F(\epsilon)=e_F-(1-\frac{1}{p})u $.
\end{proof}

\begin{proposition}\label{Witt vector stuff}
Let $C_1=D(x_1,1)=\frac{x_1^p+1-(x_1+1)^p}{p}\in \midfield $. There is
$\sigma_1\in Gal(K_2/\littlefield)$ such that
\begin{align}
(\sigma_1-1)x_1 & =1+\epsilon \nonumber \\
(\sigma_1-1)x_2 & =C_1+\delta' \nonumber
\end{align}
where $v_2(\epsilon)=p^2\ek-p(p-1)b_1$ and $v_2(\delta')>0 $.
\end{proposition}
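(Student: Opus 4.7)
The plan is to produce $\sigma_1$ via Lemma \ref{keatings lemma} applied to $x_1$, and then extract the effect on $x_2$ from the Artin--Schreier--Witt relation $x_2^p - x_2 = a_2 + D_1$. For the first equation, I would apply Lemma \ref{keatings lemma} with $F = \littlefield$ and $\beta = a_1$. The hypotheses follow from Choice \ref{first choice}: $p \nmid b_1$ and $0 < b_1 < \frac{p}{p^2-1}\ek < \frac{p}{p-1}\ek$. The lemma yields $\tau_1 \in Gal(\midfield/\littlefield)$ with $\tau_1(x_1) = x_1 + 1 + \epsilon$ and $v_0(\epsilon) = \ek - (1 - 1/p)b_1$, which rescales to $v_2(\epsilon) = p^2\ek - p(p-1)b_1$. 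I would then lift $\tau_1$ to $\sigma_1 \in Gal(\bigfield/\littlefield)$; since its restriction to $\midfield$ is nontrivial, $\sigma_1$ generates the cyclic group $Gal(\bigfield/\littlefield)$.

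For the second equation, applying $\sigma_1$ to $x_2^p - x_2 = a_2 + D(x_1, a_1)$ (noting that $a_1, a_2 \in \littlefield$ are fixed) gives $\sigma_1(x_2)^p - \sigma_1(x_2) = a_2 + D(x_1 + 1 + \epsilon, a_1)$. Subtracting and writing $\delta := (\sigma_1 - 1) x_2$, I obtain
\[
\delta^p + \sum_{i=1}^{p-1}\binom{p}{i}x_2^{p-i}\delta^i - \delta = D(x_1 + 1 + \epsilon, a_1) - D(x_1, a_1).
\]
A direct manipulation using $x_1^p = x_1 + a_1$ shows $D(x_1 + 1, a_1) - D(x_1, a_1) = D(x_1^p, 1) - C_1$. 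The polynomial identity $f(X)^p \equiv f(X^p) \pmod p$ in $\mathbb{Z}[X]$, applied to $f = D(X, 1)$, writes $D(x_1^p, 1) = C_1^p - p Q(x_1)$ for some $Q \in \mathbb{Z}[X]$. Hence the right-hand side equals $C_1^p - C_1$ plus an error of size $p Q(x_1)$ plus an $\epsilon$-perturbation coming from $D(x_1+1+\epsilon, a_1) - D(x_1+1, a_1)$.

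Finally, substituting $\delta = C_1 + \delta'$ and expanding, the dominant $C_1^p - C_1$ terms on the two sides cancel, leaving an equation of the form
\[
\delta'^p - \delta' = (\text{Frobenius error from } p Q(x_1)) + (\text{cross terms such as } p D(x_2, C_1)) + (\epsilon\text{-perturbations}).
\]
The proof concludes by showing the right-hand side has strictly positive $v_2$-valuation, whence a Hensel-type argument (or direct iteration on the Artin--Schreier equation) forces $v_2(\delta') > 0$. The main obstacle will be this detailed valuation estimate: each error term must be bounded using the two inequalities of Choice \ref{second choice} together with the valuations $v_2(C_1) = -p(p-1)b_1$, $v_2(x_2) = -(p-1)b_1 - b_2$, and $v_2(\epsilon) = p^2\ek - p(p-1)b_1$. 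For example, $v_2(p Q(x_1)) > 0$ follows from $(p-1)b_1 < \ek$ (a consequence of Choice \ref{first choice}), while the dominant cross term in $pD(x_2, C_1)$ reduces directly to (\ref{first bound}).
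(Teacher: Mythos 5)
Your proposal is essentially the paper's proof with the Witt-vector packaging unwound: the paper substitutes $\vec{X}\mapsto\vec{X}\oplus\vec{x}$ into $\mathbf{F}(\vec{X})=\vec{X}\oplus\vec{a}$ and solves the resulting second-coordinate equation by Hensel, while you apply $\sigma_1$ directly to $x_2^p-x_2=a_2+D(x_1,a_1)$ and solve for $\delta=(\sigma_1-1)x_2$. Your key identity $D(x_1+1,a_1)-D(x_1,a_1)=D(x_1^p,1)-C_1$ is correct (it is the cocycle relation $D(X,Y)+D(X+Y,Z)=D(Y,Z)+D(X,Y+Z)$ coming from associativity of Witt addition, with $x_1+a_1=x_1^p$), and it is exactly what the paper's substitution encodes; likewise your use of $f(X)^p\equiv f(X^p)\bmod p$ plays the role of the paper's congruence $(D(1,x_1))^p\equiv D(1,x_1^p)$, and your valuation estimates ($pQ(x_1)$ from Choice \ref{first choice}, $pD(x_2,C_1)$ from (\ref{first bound}), the $\epsilon$-perturbation from $b_1<\frac{p}{p^2-1}\ek$) match the ones the paper carries out.

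The one point that would fail as written is the choice of lift. After producing $\tau_1\in Gal(\midfield/\littlefield)$ you take an arbitrary lift $\sigma_1$ and assert that the Hensel argument ``forces'' $v_2(\delta')>0$. It does not: your degree-$p$ equation for $\delta$ has as its $p$ roots exactly the elements $(\sigma_1\tau^j-1)x_2$ for $\tau$ a generator of $Gal(\bigfield/\midfield)$ and $0\leq j\leq p-1$, and these differ from one another by $j(1+\delta)$ with $j$ a unit for $j\neq 0$. Hence only one of the $p$ lifts of $\tau_1$ satisfies $(\sigma_1-1)x_2\equiv C_1\bmod\mmax$; for the others $\delta'$ has valuation $0$. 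The fix is immediate and consistent with the proposition's existence statement: Hensel produces the unique root $\delta$ with $\delta\equiv C_1\bmod\mmax$, and you then \emph{define} $\sigma_1$ to be the lift realizing that root (its restriction to $\midfield$ is still $\tau_1$, so the first displayed equation is unaffected). With that adjustment the argument goes through.
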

\begin{proof}
Let $\vec{X}=(X_1,X_2) $
and $\vec{a}=(a_1,a_2)$. 
Let $\vec{x}=(x_1,x_2)\in W_2(K_2)$. Notice that 
$\vec{x}$ is a solution to the equation
$\mathbf{F}(\vec{X})=\vec{X}\oplus\vec{a}. $
Replacing $\vec{X}$ with $\vec{X}\oplus\vec{x} $ we get a new equation 
$\mathbf{F}(\vec{X}\oplus\vec{x})=\vec{X}\oplus\vec{x}\oplus 
\vec{a}=\vec{X}\oplus\mathbf{F}(\vec{x}). $
So
$$
((X_1+x_1)^p,(X_2+x_2+D(X_1,x_1))^p)=
(X_1+x_1^p,X_2+x_2^p+D(X_1,x_1^p)).
$$
Lemma \ref{keatings lemma} tells us that
the first coordinate equation is solved by $X_1=1+\epsilon$
with $v_L(\epsilon)=p\ek-(p-1)b_1$. Substitute 
$X_1=1+\epsilon$ into the second coordinate
equation
\begin{equation}\label{sub}
(X_2+x_2+D(1+\epsilon,x_1))^p=X_2+x_2^p+D(1+\epsilon,
x_1^p).
\end{equation}

For $\alpha\in K_2$ with $v_2(\alpha)<0$ we have,
$$D(1+\epsilon,\alpha)=-\sum_{i=1}^{p-1}\frac{\binom{p}{i}}{p}\alpha^{p-i}
(1+\epsilon)^i=
D(1,\alpha)-\sum_{i=1}^{p-1}\left(\frac{\binom{p}{i}}{p}\alpha^{p-i}\sum_{j=1}^i
\binom{i}{j}\epsilon^j\right).$$
For $1\leq i\leq p-1$ we have
$$v_2\left(\sum_{j=1}^{i}\binom{i}{j}\epsilon^j\right)=v_2(\epsilon)>0. $$
Since $v_2(\alpha)<0$ it follows that
$$v_2\left(\sum_{i=1}^{p-1}\left(\frac{\binom{p}{i}}{p}\alpha^{p-i}\sum_{j=1}^i
\binom{i}{j}\epsilon^j\right)\right)=v_2(\epsilon\alpha^{p-1}). $$
So 
\begin{align}
D(1+\epsilon,x_1) & \equiv D(1,x_1) \mod \mmax^{p^2\ek-2p(p-1)b_1}\nonumber \\
D(1+\epsilon,x_1^p) & \equiv D(1,x_1^p) \mod
\mmax^{p^2\ek-p(p^2-1)b_1}.
\nonumber \end{align} 
Recall $b_1=-v_0(a_1)<\frac{p}{p^2-1}\ek$ so
$p^2\ek-2p(p-1)b_1>p^2\ek-p(p^2-1)b_1>0 $.
Now from (\ref{sub}) we get the congruence
\begin{equation}\label{simplify}
(X_2+x_2+D(1+\epsilon,x_1))^p\equiv X_2+x_2^p+D(1,x_1^p)
\mod\mmax[X_2].
\end{equation}

Next we show that 
\begin{equation}\label{to be shown}
(X_2+x_2+D(1+\epsilon,x_1))^p\equiv 
(X_2+x_2+D(1,x_1))^p\mod \mmax[X_2]. 
\end{equation}
Set $\beta=x_2+D(1,x_1)$ and
$\beta'=x_2+D(1+\epsilon,x_1)$.
Notice that (\ref{valuation of second generator}) implies that $v_2(x_2)=-(p-1)b_1-b_2$ and (\ref{lower breaks}) implies that
$-(p-1)b_1-b_2<-p(p-1)b_1=v_2(D(1,x_1))=v_2(D(1,x_1+\epsilon)). $
Hence
$v_2(\beta)=v_2(x_2)=v_2(\beta'). $
So $v_2(p\beta^{p-1})=v_2(p(\beta')^{p-1})=
p^2\ek+p(p-1)v_0(a_2)>0 $ by (\ref{first bound equals}).
Thus 
\begin{alignat}{3}
(X_2+\beta)^p & \equiv  X_2^p+\beta^p  && \mod\mmax[X_2] \nonumber \\
(X_2+\beta')^p & \equiv 
X_2^p+(\beta')^p && \mod\mmax[X_2].
\nonumber
\end{alignat}
Now since $v_2\left(\displaystyle\binom{p}{i}x_2^{p-i}\right)>0 $
for $1\leq i\leq p-1$ and $D(1+\epsilon,x_1)\equiv 
D(1,x_1)\mod \mmax$ we see that
$\beta^p\equiv (\beta')^p\mod\mmax $.
Thus (\ref{to be shown}) is proven.
Now (\ref{simplify}) can be restated as
\begin{equation}\label{simp}
    (X_2+x_2+D(1,x_1))^p\equiv X_2+x_2^p+D(1,x_1^p)
    \mod \mmax[X_2].
\end{equation}

It follows from (\ref{depth of ramification}) that $v_2(px_2^{p-1}D(1,x_1))=
p^2\ek-(p-1)^2b_1-(p-1)b_2>p^2\ek-p(p-1)b_1-(p-1)b_2>0$.
Thus 
\begin{alignat}{3}
(X_2+x_2+D(1,x_1))^p & =(X_2+\beta)^p \nonumber \\
& \equiv X_2^p+\beta^p && \mod\mmax \nonumber \\
& \equiv X_2^p+x_2^p+(D(1,x_1))^p+\displaystyle\sum_{i=1}^{p-1}
\binom{p}{i}(D(1,x_1))^{p-i}x_2^i && \mod\mmax \nonumber \\
& \equiv X_2^p+x_2^p+(D(1,x_1))^p && \mod\mmax.
\nonumber
\end{alignat}
So (\ref{simp}) simplifies to 
\begin{equation}\label{simplify again}
    X_2^p+(D(1,x_1))^p\equiv X_2+D(1,x_1^p)\mod\mmax[X_2].
\end{equation}
\indent Suppose $p=2$, then $$(D(1,x_1))^p-D(1,x_1^p)=
2x_1^2\equiv 0\mod\mmax^{4\ek-4b_1}. $$
Now assume $p$ is odd. Consider $$(D(1,x_1))^p\equiv 
-\sum_{i=1}^{p-1}\left(\frac{\binom{p}{i}}{p}\right)^px_1^{pi}
\mod px_1^{(p-1)^2+(p-2)}\mints.$$
Since $$\left(\frac{\binom{p}{i}}{p}\right)^p\equiv 
\frac{\binom{p}{i}}{p}\mod\mmax^{p^2\ek} $$
we get 
$$-\sum_{i=1}^{p-1}\left(\frac{\binom{p}{i}}{p}\right)^px_1^{pi}\equiv 
D(1,x_1^p)\mod \mmax^{p^2\ek-p^2(p-1)b_1}. $$
Notice that $\min\{p^2\ek-p(p-1)^2b_1-p(p-2)b_1,
p^2\ek-p^2(p-1)b_1 \}>p^2\ek-p(p^2-1)b_1>0. $ Thus
$$(D(1,x_1))^p\equiv D(1,x_1^p)\mod\mmax[X_2]. $$
Hence (\ref{simplify again}) simplifies to 
$$X_2^p\equiv X_2\mod\mmax[X_2]. $$

Let $$f(X_2)=(X_2+x_2+D(1+\epsilon,x_1))^p-X_2-x_2^p+D(1+\epsilon,
x_1^p).$$
Notice that $f(X_2)=0$ is equivalent to $X_2$ being a solution
to (\ref{sub}). 
It follows from Hensel's lemma that there is
$\alpha\in \mmax$ with $f(\alpha)=0$. So 
$(x_1+1+\epsilon,x_2+D(1+\epsilon,x_1)+\alpha) $ is a solution to
$\mathbf{F}(\vec{X})=\vec{X}\oplus \vec{a}. $
Since $D(1+\epsilon,x_1)\equiv D(1,x_1)\mod\mmax$ and $D(1,x_1)=C_1,$
we may say $x_2+D(1+\epsilon,x_1)+\alpha=x_2+C_1+\delta' $ 
where $v_2(\delta')>0$. Now $(x_1+1+\epsilon,x_2+C_1+\delta') $
is a solution to $\mathbf{F}(\vec{X})=\vec{X}\oplus \vec{a} .$
Hence there is $\sigma_1\in G$ such that $\sigma_1x_1=x_1+1+\epsilon $
and $\sigma_1x_2=x_2+C_1+\delta'.$
\end{proof}
Now we have $$\sigma_1(x_1,x_2)\equiv
(x_1,x_2)
\oplus 
(1,0)
\mod W_2(\mmax)
$$ 
so $$\sigma_1^p(x_1,x_2)\equiv (x_1,x_2)\oplus
p(1,0)
\equiv (x_1,x_2)\oplus (0,1)
\mod W_2(\mmax).
$$
Thus
$$\sigma_1^px_2\equiv x_2+1\mod\mmax.$$
Let $\sigma_2=\sigma_1^p$. 
Since $\littlefield(x_1)$ is the fixed field of $\langle \sigma_2\rangle\leq G$
we see that
$
    (\sigma_2-1)x_1=0
.$
Also Lemma \ref{keatings lemma} tells us  
$(\sigma_2-1)x_2=1+\delta
$
for some $\delta\in \bigfield$
with $v_2(\delta)\geq p^2\ek+(p-1)v_1(a_2)$.

Given $x,y\in \littlefield^{alg}$ we have 
\textit{truncated exponentiation}
given by $$(1+x)^{[y]}=\sum_{i=0}^{p-1}\binom{y}{i}x^i$$
where 
$\binom{y}{i}=\frac{y(y-1)\cdots(y-i+1)}{i!}. $
\begin{theorem}\label{create the scaffold} Let $\bigfield/\littlefield$ be the $C_{p^2}$-extension
constructed using Choices \ref{first choice} and \ref{second choice}.
There is a Galois scaffold for $\bigfield/\littlefield$
of precision $$\mathfrak{c}\geq
\min\{b_2-p^2b_1,p^2\ek-(p-1)b_2-p(p-1)b_1\}$$
with
$\Psi_1$ and $\Psi_2$ defined by 
$$\Psi_1+1=\sigma_1\sigma_2^{[\mu]}=
\sigma_1\displaystyle\sum_{i=0}^{p-1}\binom{\mu}{i}
(\sigma_2-1)^i $$ and $\Psi_2=\sigma_2-1 $. 
\end{theorem}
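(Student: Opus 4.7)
The plan is to verify each item of the scaffold definition. The elements $\lambda_t$ are the standard construction: fix a uniformizer $\pi_2$ of $\bigfield$ and, for $t=r+kp^2$ with $0\leq r<p^2$, set $\lambda_t=\pi_2^r\pi_0^k$. This automatically gives $v_2(\lambda_t)=t$ and $\lambda_s\lambda_t^{-1}=\pi_0^{(s-t)/p^2}\in\littlefield$ whenever $s\equiv t\pmod{p^2}$. For the subsequent computations one works with a specific $\pi_2$ expressed in terms of $x_1$, $x_2$, $\pi_0$, so that the Galois action can be carried out explicitly.

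For $\Psi_2=\sigma_2-1$, I apply $\sigma_2x_1=x_1$ and $\sigma_2x_2=x_2+1+\delta$ (Proposition~\ref{Witt vector stuff} and the remark immediately after its proof), together with the lower bound on $v_2(\delta)$ coming from \eqref{depth of ramification}. Expanding $(\sigma_2-1)\lambda_t$ via the binomial theorem in the $x_2$-variable yields a leading term $u_{2,t}\lambda_{t+b_2}$ with $u_{2,t}\in\littlefield^{\times}$ exactly when the $x_2$-exponent of $\lambda_t$ is nonzero --- equivalently $\mathfrak{a}(t)_{(0)}\geq 1$ --- and vanishes identically otherwise; the error lies in $\lambda_{t+b_2}\mmax^{\mathfrak{c}}$ for any $\mathfrak{c}$ below the announced bound.

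The heart of the argument is computing $\Psi_1=\sigma_1\sigma_2^{[\mu]}-1$. Truncated exponentiation is engineered so that $(\sigma_2-1)x_1=0$ gives $\sigma_2^{[\mu]}x_1=x_1$, while iterating the $b_2$-jump on $v_2(\delta)$ shows that $(\sigma_2-1)^{i}x_2$ for $i\geq 2$ has very large valuation, so $\sigma_2^{[\mu]}x_2\equiv x_2+\mu$ modulo terms of large valuation. Applying $\sigma_1$ and invoking $(\sigma_1-1)x_2=C_1+\delta'$ from Proposition~\ref{Witt vector stuff} yields $\Psi_1x_2\equiv C_1+\mu$. Now \eqref{lower breaks} supplies $v_2(\mu)=b_1-b_2<-p(p-1)b_1=v_2(C_1)$, so $\mu$ dominates with valuation exactly $v_2(x_2)+pb_1$, precisely the shift required by the scaffold. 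A Leibniz-type extension of the same computation to the full monomial description of $\lambda_t$ delivers the dichotomy governed by $\mathfrak{a}(t)_{(1)}$.

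The main obstacle is bookkeeping two competing precision sources in order to produce the stated $\min$. The gap $v_2(C_1)-v_2(\mu)=b_2-(p^2-p+1)b_1$ between the leading $\mu$ and the subleading $C_1$ inside $\Psi_1x_2$ produces (with slack) the first bound $b_2-p^2b_1$, while the Hensel-type residuals $\epsilon$, $\delta$, $\delta'$ controlled via Lemma~\ref{keatings lemma} and \eqref{depth of ramification} produce the second bound $p^2\ek-(p-1)b_2-p(p-1)b_1$. Tracking which bound dominates for each residue class $t\bmod p^2$, and verifying uniformly that the error in $\Psi_i\lambda_t$ lies in $\lambda_{t+p^{2-i}b_i}\mmax^{\mathfrak{c}}$, is the computationally intensive final step.
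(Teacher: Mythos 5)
There is a genuine gap, and it sits exactly where the paper's proof does its real work. Your proposal defines $\lambda_t=\pi_2^r\pi_0^k$ for an (essentially arbitrary) uniformizer $\pi_2$. That satisfies condition 1 of the scaffold definition, but there is no reason it satisfies condition 2: for $\Psi_2$ the required dichotomy is governed by $\mathfrak{a}(t)_{(0)}$, i.e.\ by $t\bmod p$, which a single uniformizer can just about see, but for $\Psi_1$ the dichotomy is governed by $\mathfrak{a}(t)_{(1)}$, which depends on $t\bmod p^2$, and powers of one element have no mechanism to produce it. Later you silently switch to speaking of ``the $x_2$-exponent of $\lambda_t$,'' which presupposes that the $\lambda_t$ are monomials in two generators --- but monomials in $x_1$ and $x_2$ themselves do not work either, because $v_2(x_2)=-(p-1)b_1-b_2\equiv -pb_2\not\equiv -b_2\pmod{p^2}$ by \eqref{valuation of second generator}, so the valuations of such monomials do not march in step with the shifts $b_2$ and $pb_1$ demanded by the scaffold. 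The missing idea is the substitution $y_2=x_2-\mu x_1$: one computes $\wp(y_2)\equiv D_1-\wp(\mu)x_1$ modulo a high power of $\mmax$ and deduces $v_2(y_2)=-b_2$, which is precisely what makes a monomial basis in $y_1=x_1$ and $y_2$ (the input to condition 1 \emph{and} condition 2) possible. This computation, which is the only genuinely new content of the proof, appears nowhere in your proposal.

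The second, related, divergence is structural. The paper does not verify the two scaffold axioms by hand; it verifies the hypotheses of the general machine of [BE18] --- namely that $y_1,y_2$ satisfy Choice 2.3 there, that $(\sigma_i-1)y_j=\mu_{i,j}+\epsilon_{i,j}$ with $v_j(\mu_{i,j})=b_i-b_j<v_j(\epsilon_{i,j})$ (this is where your observation that $\mu$ dominates $C_1$ reappears, as $\mu_{1,2}=-\mu$, $\epsilon_{1,2}=(\sigma_1-1)x_2-\mu\epsilon$), and that $\mathfrak{c}=\min_{i\leq j}\{v_2(\epsilon_{i,j})-v_2(\mu_{i,j})-pu_i+p^{2-j}b_i\}>0$ --- and then invokes [BE18, Theorem 2.10] to manufacture the $\lambda_t$ and the stated precision. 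Your ``Leibniz-type extension'' and ``bookkeeping'' paragraphs are in effect a proposal to reprove that theorem from scratch; that is possible in principle, but as sketched it cannot be completed without first producing $y_2$, and your accounting of the precision (the gap $v_2(C_1)-v_2(\mu)=b_2-(p^2-p+1)b_1$ ``with slack'') does not explain why the correction $-pu_1+b_1$ appears to yield exactly $b_2-p^2b_1$.
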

\begin{proof}
We follow the construction given in [BE18].

Let $y_1=x_1$ and $y_2=x_2-\mu x_1$. We claim that $v_2(y_2)=-b_2$.
Observe that $ v_2(y_2^p-x_2^p+(\mu x_1)^p)\geq v_2(p)-p(pb_1+p^2m)=
p^2\ek-p^2b_1-p^3m $. This means that 
$\wp(y_1)\equiv \wp(x_2)-\wp(\mu x_1)\mod \mmax^{
p^2\ek-p^2b_1-p^3m} $. Consider, $\wp(\mu x_1)=
\mu^p x_1^p-\mu x_1=\mu^p(x_1+a_1)-\mu x_1=\wp(\mu)x_1+\mu^p a_1=
\wp(\mu)x_1+a_2$, so $\wp(y_1)\equiv
D_1-\wp(\mu)x_1 \mod \mmax^{p^2\ek-p^2b_1-p^3m}$.
It follows from (\ref{lower breaks}) that $v_2(D_1-\wp(\mu)x_1)=-pb_2$.
Since $b_1<\frac{p}{p-1}\ek $, we get $-pb_1-p^3m<p^2\ek-p^2b_1-p^3m $. 
Hence
$v_2(D_1-\wp(\mu)x_1)<p^2\ek-p^2b_1-p^3m $. Thus
$v_2(\wp(y_1))=v_2(D_1-\wp(\mu)x_1)$.
Hence $v_2(y_1)=\frac{1}{p}v_2(D_1-\wp(\mu)x_1)=
-b_2 $. Therefore $y_1$ and $y_2$ satisfy choice 2.3 in [BE18].

As per (5) in [BE18], for $1\leq i\leq j\leq 2$ we choose
$\mu_{i,j}\in \littlefield$ and $\epsilon_{i,j}\in K_{j-1}$ such that
$$(\sigma_i-1)y_j=\mu_{i,j}+\epsilon_{i,j} $$ with
$v_j(\mu_{i,j})=b_i-b_j<v_j(\epsilon_{i,j})$.
Observe $(\sigma_1-1)y_1=1+\epsilon $ so we may set
$\mu_{1,1}=1 $, $\epsilon_{1,1}=\epsilon$.
Next, $$(\sigma_1-1)y_2=(\sigma_1-1)x_2-\mu-\mu\epsilon. $$
Notice that $v_2(\mu)=b_1-b_2 $. In addition, Proposition 
\ref{Witt vector stuff} tells
us that $v_2((\sigma_1-1)x_2)=v_2(C_1)=-p(p-1)b_1 $ and 
$v_2(\mu\epsilon)=p^2\ek-p(p-1)b_1+b_1-b_2 $. It follows from
(\ref{depth of ramification}) that $v_2(\mu\epsilon)>v_2(C_1)$. Also (\ref{lower breaks}) tells us that
$b_2>(p^2-p+1)b_1 $ and so $b_2-b_1>p(p-1)b_1 $. Now we may set
$\mu_{1,2}=-\mu $ and $\epsilon_{1,2}=(\sigma_1-1)x_2-\mu\epsilon$.
Note that $v_2(\epsilon_{1,2})=-p(p-1)b_1$.
Finally, $(\sigma_2-1)y_2=1+\delta. $
Recall that Lemma \ref{keatings lemma} tells us $v_2(\delta)\geq 
p^2\ek-p(p-1)u_2. $
Set $\mu_{2,2}=1$
and $\epsilon_{2,2}=\delta$. 

In order to satisfy assumption 2.9 in [BE18] we must
show that 
$$\mathfrak{c}:=\min_{1\leq i\leq j\leq 2}\{v_2(\epsilon_{i,j})-v_2(\mu_{i,j})-pu_i+p^{2-j}b_i\}>0.$$
We find that \begin{align}
\mathfrak{c} & \geq \min\{p^2\ek-p(p-1)b_1,
b_2-p^2b_1,p^2\ek-(p-1)b_2-p(p-1)b_1\}
\nonumber \\
& =
\min\{b_2-p^2b_1,p^2\ek-(p-1)b_2-p(p-1)b_1\}
\nonumber 
\end{align}
which is positive by (\ref{lower breaks}) and 
(\ref{depth of ramification}).

As per definition 2.7 in [BE18], let $\Psi_1=\sigma_1\sigma_2^{[\mu]}-1$
and $\Psi_2=\sigma_2-1$. It follows from [BE18, Theorem 2.10] 
that there are $\{\lambda_t\}_{t\in\mathbb{Z}} $ which along with
$\Psi_1 $ and $\Psi_2$ give us a Galois scaffold of precision
$\mathfrak{c}.$
\end{proof}
\begin{corollary}\label{scaffold applied to valuation criterion}
For each $0\leq i,j\leq p-1$ 
$$v_2\left(\Psi_1^i\Psi_2^j\alpha\right)=
v_2(\alpha)+jb_2+ipb_1 $$ whenever $\alpha\in \bigfield $
satisfies $v_2(\alpha)\equiv b_2\mod p^2.$
\end{corollary}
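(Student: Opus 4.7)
The plan is to apply the Galois scaffold from Theorem~\ref{create the scaffold} iteratively to $\alpha$, tracking both the valuation and the $p$-adic digits of $\mathfrak{a}$ at each step.

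From the hypothesis $v_2(\alpha) \equiv b_2 \pmod{p^2}$, one computes $\mathfrak{a}(v_2(\alpha)) \equiv v_2(\alpha)b_2^{-1} \equiv 1 \pmod{p^2}$, so $\mathfrak{a}(v_2(\alpha)) = 1$, with $p$-adic digits $\mathfrak{a}_{(0)} = 1$ and $\mathfrak{a}_{(1)} = 0$. I would decompose $\alpha = c\lambda_{v_2(\alpha)} + \beta$ with $c \in \mathfrak{O}_2^\times$ and $v_2(\beta) > v_2(\alpha)$, and apply $\Psi_2^j$ first. Since each application of $\Psi_2$ shifts the valuation by $b_2$ and $\mathfrak{a}$ by $+1 \pmod{p^2}$, and since the units digit of $\mathfrak{a}$ runs through $1, 2, \ldots, p-1$ during this phase, the non-trivial branch of the scaffold applies at each step and yields $v_2(\Psi_2^j \alpha) = v_2(\alpha) + jb_2$.

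Next I would apply $\Psi_1^i$, analogously tracking that each application shifts the valuation by $pb_1$ and $\mathfrak{a}$ by $+p \pmod{p^2}$ (using $b_1 \equiv b_2 \pmod{p^2}$). Error terms arising from the precision $\mathfrak{c}$ of the scaffold, from the higher-order part $\beta$, and from the fact that $c$ need not lie in $\littlefield$, would be absorbed by the strict positivity of the lower bound $\mathfrak{c} \geq \min\{b_2 - p^2 b_1,\, p^2 \ek - (p-1)b_2 - p(p-1) b_1\}$ from Theorem~\ref{create the scaffold}, using also (\ref{lower breaks}) and (\ref{depth of ramification}) to compare magnitudes of the various corrections with the leading contribution.

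The main obstacle is controlling the digit conditions uniformly during the $\Psi_1$-phase: after $\Psi_2^j$, the current $\mathfrak{a}$-value is $j+1$, whose $p$-digit equals $0$ for $j < p-1$, so the naive piecewise reading of the scaffold would only give the inequality $v_2(\Psi_1 \Psi_2^j \alpha) \geq v_2(\alpha) + jb_2 + pb_1 + \mathfrak{c}$ rather than the equality claimed. To extract the precise shift, I expect one must appeal to the stronger companion statement of [BE18, Theorem 2.10] — in particular the part guaranteeing that, for the specific scaffold constructed in Theorem~\ref{create the scaffold}, iterated products $\Psi_1^i\Psi_2^j \lambda_t$ have leading term of valuation exactly $t + jb_2 + ipb_1$ for all $0 \leq i,j \leq p-1$ whenever $\mathfrak{a}(t) = 1$ — and then transfer this from $\lambda_{v_2(\alpha)}$ to $\alpha$ via the decomposition above.
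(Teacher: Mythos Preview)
The paper gives no argument here; it simply cites [BCE, Theorem~A.1(3)]. Your direct iteration of the scaffold axioms is the natural approach, and you correctly isolate the obstruction: with $\mathfrak{a}(v_2(\alpha))=1$ the $p$-digit is $0$, so the scaffold only yields $v_2(\Psi_1\lambda_t)\geq t+pb_1+\mathfrak{c}$, not equality. This is not a technicality that the precision bound absorbs---taking $\alpha=\lambda_t$ itself shows that the abstract scaffold axioms, as written in this paper, are strictly incompatible with the equality $v_2(\Psi_1\alpha)=v_2(\alpha)+pb_1$ claimed by the corollary for $i=1$, $j=0$.

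The gap is in your proposed resolution. Appealing to a ``stronger companion statement'' of [BE18, Theorem~2.10] is not what is needed: that theorem manufactures the scaffold data $(\{\lambda_t\},\Psi_1,\Psi_2)$ but does not supply the valuation formula you want. The result actually being invoked is [BCE, Theorem~A.1(3)], whose proof runs in BCE's conventions (there the residue function carries the opposite sign, so that $\mathfrak{a}(b)=p^n-1$ has all digits equal to $p-1$ and every step of the $\Psi_2$- and $\Psi_1$-iteration lands in the nonzero branch). The present paper has deliberately reindexed $\Psi^{(a)}$ and $\mathfrak{b}$ relative to BCE, as it notes, and is importing Theorem~A.1(3) across that translation; your argument must do the same, or else redo the BCE appendix computation in the present notation. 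A secondary issue: your decomposition $\alpha=c\lambda_{v_2(\alpha)}+\beta$ with $c\in\mathfrak{O}_2^\times$ is not directly usable, since the $\Psi_i$ are only $\littlefield$-linear; one must instead expand $\alpha$ over the $\littlefield$-basis $\{\lambda_{t+s}:0\le s<p^2\}$ and control each term separately.
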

\begin{proof}
See Theorem A.1 part (3) in [BCE18].
\end{proof}
Much work has been done to study the Galois module structure using the integer $\mathfrak{c}$. The reader 
should refer to Theorem 3.1 in [BCE] to learn more.
We will take the route that Byott and Elder did in [BE13] and use 
Corollary 3.6 
to derive results about the Galois module structure.
This allows us to reach the conclusions given in
[BCE, Theorem 3.1] while relaxing the assumptions about
precision.
We believe that our theorems could be generalized to allow
$a_2=\mu^pa_1+\epsilon$ for some $\epsilon\in\littlefield$ 
of sufficiently large valuation as in [BE13]. However, we have not
carried out the necessary computations. 
\section{Resulting Galois Module Structure}
Let $\bigfield/\littlefield$ be the $C_{p^2}$-extension constructed in
Section 3 using Choices \ref{first choice} and \ref{second choice},
and let $(\{\lambda_t\}_{t\in\mathbb{Z}},\{\Psi_i \}_{i=1,2}) $ be the Galois
scaffold from Theorem \ref{create the scaffold}.
For every non-negative integer $a$ let 
$$\Psi^{(a)}=\left\{\begin{array}{cc} \Psi_2^{a_{(1)}}\Psi_1^{a_{(0)}}, &
a<p^2\\ 
0, & \mbox{otherwise}
\end{array}\right. $$
where $a=\displaystyle\sum_{i=0}^\infty a_{(i)}p^i$ with $0\leq a_{(i)}\leq p-1$.
Also define a function $\mathfrak{b}$ from the non-negative integers to
$\mathbb{Z}\cup\{\infty\}
$ by $$\mathfrak{b}(a)=\left\{\begin{array}{cc}
(1+a_{(1)})b_2+a_{(0)}pb_1, & a<p^2\\
\infty, & \mbox{otherwise}
\end{array}\right. $$
Note that $\Psi^{(a)}$ and $\mathfrak{b}$ are different from their 
counterparts in
[BCE]. 
It follows from Corollary \ref{scaffold applied to valuation criterion} 
that given any $\rho\in\bigfield$ with 
$v_2(\rho)=b_2$ we have $v_2(\Psi^{(a)}\rho)=\mathfrak{b}(a)$.
For $0\leq a<p^2$, set
$$
d_a=\left\lfloor \frac{\mathfrak{b}(a)}{p^2}\right\rfloor,
$$
so $\mathfrak{b}(a)=d_ap^2+r(\mathfrak{b}(a)) $ where $r(\mathfrak{b}(a))$ is the least nonnegative
residue modulo $p^2$ of $\mathfrak{b}(a)$. 
In addition say $d_a=\infty$ when $a\geq p^2$.
For $0\leq j<p^2$ let 
$$w_j=\min\{d_{j+a}-d_a:0\leq a<p^2-j \}.$$
Observe that $w_j\leq d_j-d_0 $ for all $j$.\\
\indent Let $\rz\in \bigfield$ with $v_2(\rz)=r(b_2)$. Set $\rho=\pk^{d_0}\rz $, so
$v_2(\rho)=b_2$. Moreover, for $a\geq 1 $ set
$$
\rho_a=\pk^{-d_a}\Psi^{(a)}\rho.
$$
Now $\rho_a=0 $ whenever $a\geq p^2$ and $v_2(\rho_a)=r(\mathfrak{b}(a)) $ when $0\leq a<p^2$.
Thus $\{v_2(\rho_a):0\leq a<p^2\}=\{0,1,...,p^2-1\} $, so $\{\rho_a\}_{0\leq a<p^2} $ is a 
$\kints$-basis for $\mints $ and the elements $\Psi^{(a)}\rho $ span $\bigfield$ over $\littlefield$. By
comparing dimensions we see that $\rho$ generates a normal basis for the extension $\bigfield/\littlefield$, and
$\{\Psi^{(a)}\}_{0\leq a<p^2}$ is a $\littlefield$-basis for the group algebra $\littlefield[G]$.\\
\indent We aim to estimate valuations of $\Psi_1^p\rho$
and $\Psi_2^p\rho$. First we will consider $\Psi_2^p\rho$. If $p=2$ and $x\in \bigfield^\times$ 
then
$\Psi_2^p x=(\sigma_2-1)^2 x=
(1-2\sigma_2+1) x=2(1-\sigma_2) x$
and so $v_2(\Psi_2^2 x)\geq 4\ek+b_2+v_2(x) $.
Now assume $p\neq 2$ and let $x\in \bigfield^\times$.
Consider $(\sigma_2-1)^p x=
\left(\sigma_2^p+\sum_{i=1}^{p-1}\binom{p}{i}(-1)^{p-i}
\sigma_2^i-1\right) x=
\sum_{i=1}^{p-1}\binom{p}{i}(-1)^{p-i}\sigma_2^i x$
and so $v_2(\Psi_2^p x)\geq p^2\ek+v_2(x) $. In both
cases we may conclude that
\begin{equation}\label{gets big}
    v_2(\Psi_2^p x)\geq p^2\ek+v_2(x)
\end{equation}
for all $x\in \bigfield^\times$. This of course
implies that
\begin{equation}\label{sigma 2}
v_2(\Psi_2^p\rho)\geq p^2\ek+b_2.
\end{equation}
\indent Now we turn our attention to $\Psi_1^p$.
Recall that $\Psi_1+1=\sigma_1\sigma_2^{[\mu]} $.
Using the binomial theorem we find that
$$(\sigma_1\sigma_2^{[\mu]})^p\rho=
\Psi_1^p\rho+\rho+\sum_{i=1}^{p-1}\binom{p}{i}\Psi_1^i\rho$$
and so
$$(\Psi_1^p+1)\rho\equiv (\sigma_1\sigma_2^{[\mu]})^p\rho
\mod \mmax^{p^2\ek+b_2+pb_1}.$$
We would like to get a lower bound for
$v_2(((\sigma_2^{[\mu]})^p-1)\rho)$.
Using the multinomial theorem we see that
$$(\sigma_2^{[\mu]})^p=
\displaystyle \sum_{\substack{i_0+i_1+...+i_{p-1}=p\\
0\leq i_j\leq p}}\left(\binom{p}{i_0, i_1,..., i_{p-1}}
\prod_{j=0}^{p-1}\left(
\binom{\mu}{j}^{i_j}(\sigma_2-1)^{ji_j}\right)
\right).
$$
Notice that for $0\leq i,j\leq p-1$,
$v_2\left(\binom{\mu}{j}^i(\sigma_2-1)^{ij}\rho \right)\geq ij(b_1-b_2)+(ij+1)b_2=b_2+ijb_1 $
and so it follows that
$$(\sigma_2^{[\mu]})^p\rho\equiv 
\displaystyle\sum_{j=0}^{p-1}\binom{\mu}{j}^p(\sigma_2-1)^{pj}\rho \mod\mmax^{p^2\ek+b_2}.$$
Now (\ref{gets big}) tells us $$((\sigma_2^{[\mu]})^p-1)\rho\equiv
\displaystyle\sum_{j=1}^{p-1}\binom{\mu}{j}^p
(\sigma_2-1)^{pj}\rho\equiv 0\mod
\mmax^{p^2\ek+b_2+p(b_1-b_2)}.$$

What we've shown is that  
\begin{equation}\label{sigma 1}
\Psi_1^p\rho\equiv (\sigma_1^p-1)\rho\equiv  \Psi_2\rho \mod\mmax^{p^2\ek+pb_1-(p-1)b_2}.
\end{equation}
Notice that $p^2\ek+pb_1-(p-1)b_2=p^2\ek+b_2-p^3m\equiv
b_2\mod p^2$. Hence we may apply Corollary \ref{scaffold applied to valuation criterion}, which implies
\begin{equation}\label{scaffold}
\Psi_1^{p+r}\Psi_2^s\rho\equiv 
\Psi_1^r\Psi_2^{s+1}\rho \mod 
\mmax^{p^2\ek+(r+1)pb_1-(p-1-s)b_2} 
\end{equation}
whenever $0\leq r\leq p-1 $ and $0\leq s\leq p-1$.
\begin{proposition}\label{new congruence}
If $c\in \littlefield$ and $v_0(c)\geq d_0-d_j$ then
\begin{equation}\label{important congruence}
c\Psi^{(j)}\rho_r\equiv c\pk^{d_{j+r}-d_r}\rho_{j+r} 
\mod\mmax^{p^2\ek-pb_2-(p^2-p+1)b_1}
\end{equation}
whenever $0\leq j,r<p^2$ and $j+r<p^2.$ Eqaulity
holds in (\ref{important congruence}) if there is not a carry
when adding the $p$-adic expansions of $j $ and $r$.
\end{proposition}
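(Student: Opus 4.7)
The plan is to unpack the definitions, exploit commutativity of $\Psi_1$ and $\Psi_2$, and isolate the carry between the $p$-adic digits of $j$ and $r$ as the only possible source of discrepancy. Since $G$ is abelian the group ring $\littlefield[G]$ is commutative, so $\Psi_1\Psi_2 = \Psi_2\Psi_1$ and
$$\Psi^{(j)}\Psi^{(r)} = \Psi_2^{j_{(1)}+r_{(1)}}\Psi_1^{j_{(0)}+r_{(0)}}.$$
Using $\rho_a = \pk^{-d_a}\Psi^{(a)}\rho$, the congruence (\ref{important congruence}) reduces to bounding the $v_2$-valuation of
$$c\pk^{-d_r}\bigl(\Psi_2^{j_{(1)}+r_{(1)}}\Psi_1^{j_{(0)}+r_{(0)}}\rho - \Psi^{(j+r)}\rho\bigr).$$

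I would then split on whether there is a carry between $j_{(0)}$ and $r_{(0)}$. If $j_{(0)}+r_{(0)}<p$, then since $j+r<p^2$ forces $j_{(1)}+r_{(1)}<p$ as well, the right-hand operator equals $\Psi_2^{j_{(1)}+r_{(1)}}\Psi_1^{j_{(0)}+r_{(0)}}$ as an element of $\littlefield[G]$, so the displayed difference is identically zero and (\ref{important congruence}) becomes equality. Otherwise write $j_{(0)}+r_{(0)} = p+q_0$ with $q_0 \in \{0,\ldots,p-2\}$ and $s = j_{(1)}+r_{(1)}$ (necessarily $\leq p-2$). Then $\Psi^{(j+r)} = \Psi_2^{s+1}\Psi_1^{q_0}$, and (\ref{scaffold}) together with commutativity gives
$$\Psi_2^{s}\Psi_1^{p+q_0}\rho - \Psi_2^{s+1}\Psi_1^{q_0}\rho \in \mmax^{p^2\ek+(q_0+1)pb_1-(p-1-s)b_2}.$$

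Multiplying through by $c\pk^{-d_r}$ and using $v_2(c)\geq p^2(d_0-d_j)$ together with $v_2(\pk^{-d_r})=-p^2 d_r$, the task becomes the arithmetic inequality
$$p^2(d_0-d_j-d_r) + (q_0+1)pb_1 - (p-1-s)b_2 \geq -pb_2 - (p^2-p+1)b_1.$$
I would verify this using the identity $\mathfrak{b}(j)+\mathfrak{b}(r)-\mathfrak{b}(j+r) = p^2 b_1$ (valid in the carry case), which forces $d_j+d_r-d_{j+r} \in \{b_1-1, b_1\}$, together with the explicit expression $d_{j+r} = (2+s)d_0 + \lfloor((2+s)r_0+q_0 pb_1)/p^2\rfloor$ where $r_0 = b_2 \bmod p^2$. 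The crucial observation, and the one place where the ``one residue class mod $p^2$'' hypothesis really gets used, is that $r_0 = b \leq b_1$, which collapses the inequality to positive-linear arithmetic. The main obstacle I expect is this last step: carrying out the floor-function bookkeeping across $d_0, d_j, d_r, d_{j+r}$ uniformly in the admissible pairs $(s,q_0)$.
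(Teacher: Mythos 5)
Your proposal follows essentially the same route as the paper's proof: write $\Psi^{(j)}\rho_r=\pk^{-d_r}\Psi^{(j)}\Psi^{(r)}\rho$, note that absence of a carry gives $\Psi^{(j)}\Psi^{(r)}\rho=\Psi^{(j+r)}\rho$ exactly, and observe that since $j+r<p^2$ a carry can only occur in the units digit, so the discrepancy is controlled by (\ref{scaffold}) applied with $q_0=j_{(0)}+r_{(0)}-p$ and $s=j_{(1)}+r_{(1)}$. The one step you flag as the main obstacle --- verifying $p^2(d_0-d_j-d_r)+(q_0+1)pb_1-(p-1-s)b_2\ge -pb_2-(p^2-p+1)b_1$ --- is correct but needs none of the floor-function bookkeeping you anticipate: the two bounds $d_a\le\mathfrak{b}(a)/p^2$ (for $a=j,r$) and $d_0\ge m$ (which is exactly your observation $r_0\le b_1$, since $d_0=m+(b_1-r_0)/p^2$), together with $\mathfrak{b}(j)+\mathfrak{b}(r)=(2+s)b_2+(p+q_0)pb_1$ and $p^2m=b_2-b_1$, give the left-hand side $\ge p^2m-(p+1)b_2-p(p-1)b_1=-pb_2-(p^2-p+1)b_1$ directly. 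Note this bound is attained, so the modulus in the statement is sharp; your detour through $d_j+d_r-d_{j+r}\in\{b_1-1,b_1\}$, while true, is not needed. This is precisely the computation carried out in the paper.
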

\begin{proof}
First notice that $\Psi^{(j)}\rho_r=
\pk^{-d_r}\Psi^{(j)}\Psi^{(r)}\rho $ and
$\pk^{d_{j+r}-d_r}\Psi_{j+r}=
\pk^{-d_r}\Psi^{(j+r)}\rho. $
Notice that $\Psi^{(j+r)}\rho=\Psi^{(j)}\Psi^{(r)}\rho $ if
there is not a carry when adding the $p$-adic expansions
of $j$ and $r$ in which case $c\Psi^{(j)}\rho_r= c\pk^{d_{j+r}-d_r}\rho_{j+r}. $

Now assume there is a carry when adding the $p$-adic expansions
of $j$ and $r$. Let $j=j_{(0)}+p j_{(1)}$ and $r= r_{(0)}+p r_{(1)} $.
We have $j_{(0)}+r_{(0)}\geq p $ and 
$j_{(1)}+r_{(1)}+1<p. $
Now $$\Psi^{(j+r)}\rho=
\Psi_1^{j_{(0)}+r_{(0)}-p}\Psi_2^{j_{(1)}+r_{(1)}+1}\rho. $$
It follows from (\ref{sigma 1}) and (\ref{scaffold}) that
\begin{align}
\Psi^{(j)}\Psi^{(r)}\rho &=\Psi_1^{j_{(0)}+r_{(0)}-p}\Psi_2^{j_{(1)}+r_{(1)}}\Psi_1^p\rho \nonumber \\
& \equiv \Psi^{(j+r)}\rho\mod 
\mmax^{p^2\ek-(p-1)b_2+pb_1+(j_{(1)}+r_{(1)})b_2+(j_{(0)}+r_{(0)}-p)pb_1} \nonumber 
\end{align}
Observe that $p^2\ek-(p-1)b_2+pb_1+(j_{(1)}+r_{(1)})b_2+(j_{(0)}+r_{(0)}-p)pb_1=p^2\ek-(p+1)b_2-p(p-1)b_1+\mathfrak{b}(j)+\mathfrak{b}(r). $
Now since $-d_r\geq -\frac{\mathfrak{b}(r)}{p^2} $ 
and $v_0(c)\geq d_0-d_j\geq m-\frac{\mathfrak{b}(j)}{p^2} $ we see
that 
$$c\Psi^{(j)}\rho_r\equiv 
c\pk^{d_{j+r}-d_r}\rho_{j+r}\mod 
\mmax^{p^2\ek-pb_2-(p^2-p+1)b_1}. $$
\end{proof}

Observe that if 
\begin{equation}\label{second new bound}
   p^2\ek-(p+1)b_2+(p-1)b_1>0 
\end{equation}
then $p^2\ek-(p-1)b_2+pb_1>2b_2$ and (\ref{sigma 1})
implies that $v_2(\Psi_1^p\rho)=2b_2. $

\begin{proposition}\label{big enough valuation}
Assume (\ref{second new bound}) is satisfied.
It is the case that $$\pk^{d_0-d_j}
\Psi^{(j)}\rho_r\in\mmax $$
whenever $0\leq r,j<p^2$ and $r+j\geq p^2.$
\end{proposition}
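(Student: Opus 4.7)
The plan is to rewrite $\Psi^{(j)}\Psi^{(r)}\rho$ (up to a small error) as $\Psi_2^{K-p}\Psi_2^p\Psi_1^S\rho$ with $K\geq p$ and $0\leq S<p$, gain a factor of valuation at least $p^2\ek$ via $(\ref{gets big})$, and then combine with $\pk^{d_0-d_j-d_r}$ coming from $\pk^{d_0-d_j}\rho_r=\pk^{d_0-d_j-d_r}\Psi^{(r)}\rho$. Since $G$ is abelian, $\Psi_1$ and $\Psi_2$ commute, so $\Psi^{(j)}\Psi^{(r)}\rho=\Psi_2^{j_{(1)}+r_{(1)}}\Psi_1^{j_{(0)}+r_{(0)}}\rho$. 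I would split into cases depending on whether a carry occurs: in the no-carry case ($j_{(0)}+r_{(0)}<p$) take $S=j_{(0)}+r_{(0)}$ and $K=j_{(1)}+r_{(1)}$; in the carry case ($j_{(0)}+r_{(0)}\geq p$) take $S=j_{(0)}+r_{(0)}-p$ and $K=j_{(1)}+r_{(1)}+1$, and first invoke $(\ref{sigma 1})$ to replace $\Psi_1^p\rho$ with $\Psi_2\rho$ modulo an element of $\mmax^{p^2\ek+pb_1-(p-1)b_2}$. In both cases the hypothesis $j+r\geq p^2$ forces $K\geq p$.

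The general scaffold estimates $v_2(\Psi_1\gamma)\geq v_2(\gamma)+pb_1$ and $v_2(\Psi_2\gamma)\geq v_2(\gamma)+b_2$ for $\gamma\in\bigfield^\times$ (obtained by applying $\Psi_i$ term by term to the expansion of $\gamma$ in the scaffold basis $\{\lambda_t\}$) let me push the $(\ref{sigma 1})$-error through $\Psi_2^{K-1}\Psi_1^S$ in the carry case. Combining with $(\ref{gets big})$ and Corollary \ref{scaffold applied to valuation criterion} applied to the main term $\Psi_2^p\Psi_1^S\rho$, the error dominates (because $b_2>pb_1$ by $(\ref{lower breaks})$), giving the carry-case bound
$$
v_2\bigl(\Psi^{(j)}\Psi^{(r)}\rho\bigr)\geq p^2\ek+(K-p)b_2+(S+1)pb_1.
$$

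Using $p^2 d_a=\mathfrak{b}(a)-r(\mathfrak{b}(a))$ to expand $p^2(d_0-d_j-d_r)$, together with the carry-case identity $\mathfrak{b}(j)+\mathfrak{b}(r)=(1+K)b_2+(p+S)pb_1$, yields
$$
v_2\bigl(\pk^{d_0-d_j}\Psi^{(j)}\rho_r\bigr)\geq p^2\ek-pb_2-p(p-1)b_1-r(b_2)+r(\mathfrak{b}(j))+r(\mathfrak{b}(r)).
$$
By $(\ref{second new bound})$ combined with $(\ref{lower breaks})$, $p^2\ek-pb_2-p(p-1)b_1>b_2-(p^2-1)b_1>b_1\geq r(b_2)$, the last inequality because $b_1\equiv r(b_2)\pmod{p^2}$ with $0<r(b_2)<p^2$. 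Together with $r(\mathfrak{b}(j)),r(\mathfrak{b}(r))\geq 0$, the right-hand side is at least $1$, proving $\pk^{d_0-d_j}\Psi^{(j)}\rho_r\in\mmax$; the no-carry case gives a strictly stronger bound. The main technical obstacle is the error-term bookkeeping in the carry case, particularly verifying that after the $(\ref{sigma 1})$-error is amplified through $\Psi_2^{K-1}\Psi_1^S$ the resulting bound combines cleanly with the arithmetic of $p^2(d_0-d_j-d_r)$ to yield a positive quantity under only the two stated hypotheses.
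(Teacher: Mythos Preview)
Your argument is correct, but your organization and the treatment of the carry case differ from the paper's. The paper splits according to the \emph{top} digit sum: Case~1 is $j_{(1)}+r_{(1)}\geq p$, where one applies (\ref{gets big}) directly to the factor $\Psi_2^p$ already present (regardless of whether $j_{(0)}+r_{(0)}\geq p$), obtaining $v_2(\Psi^{(j)}\Psi^{(r)}\rho)\geq p^2\ek-(p+1)b_2+\mathfrak{b}(j)+\mathfrak{b}(r)$ and hence $v_2(\pk^{d_0-d_j}\Psi^{(j)}\rho_r)\geq p^2\ek-pb_2-b_1$; Case~2 is $j_{(1)}+r_{(1)}=p-1$ with $j_{(0)}+r_{(0)}\geq p$, where the paper simply uses that (\ref{second new bound}) together with (\ref{sigma 1}) give the \emph{equality} $v_2(\Psi_1^p\rho)=2b_2$, yielding $v_2(\Psi^{(j)}\Psi^{(r)}\rho)\geq\mathfrak{b}(j)+\mathfrak{b}(r)-p^2b_1$ and then $v_2(\pk^{d_0-d_j}\Psi^{(j)}\rho_r)\geq p^2d_0-p^2b_1>0$. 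No error propagation is needed.

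You instead split by the \emph{bottom} digit sum, and in your carry case you invoke the full congruence (\ref{sigma 1}) to convert $\Psi_1^p\rho$ into $\Psi_2\rho$ plus an error, push that error through $\Psi_2^{K-1}\Psi_1^S$ via the general scaffold shift, and only then apply (\ref{gets big}). This works, but it is heavier than necessary: when $j_{(1)}+r_{(1)}\geq p$ you already have $\Psi_2^p$ available and the conversion costs you $(p-1)pb_1$ in the bound; when $j_{(1)}+r_{(1)}=p-1$ the paper's one-line use of $v_2(\Psi_1^p\rho)=2b_2$ replaces your entire error-tracking step. Your exact identity $p^2d_a=\mathfrak{b}(a)-r(\mathfrak{b}(a))$ in place of the paper's inequalities $d_0\geq(b_2-b_1)/p^2$, $d_a\leq\mathfrak{b}(a)/p^2$ is a matter of taste; it introduces the residue terms $r(b_2),r(\mathfrak{b}(j)),r(\mathfrak{b}(r))$ into the final estimate, which you then dispose of via $b_1\geq r(b_2)$. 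Both routes reach the goal under the same hypotheses; the paper's is shorter.
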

\begin{proof}
First observe that 
$$\pk^{d_0-d_j}\Psi^{(j)}\rho_r=
\pk^{d_0-d_j-d_r}\Psi^{(j)}\Psi^{(r)}\rho. $$
Let $r=r_{(0)}+p r_{(1)} $ and $j=j_{(0)}+p j_{(1)}$ be the $p$-adic expansions
of $r$ and $j$. Consider two cases.\\
Case 1: Assume $r_{(1)}+j_{(1)}\geq p$. 
It follows from (\ref{gets big}) and Corollary 
\ref{scaffold applied to valuation criterion} that
\begin{align}
v_2\left(\Psi^{(j)}\Psi^{(r)}\rho\right) & \geq 
p^2\ek+b_2+(j_{(1)}+r_{(1)}-p)b_2+(j_{(0)}+r_{(0)})pb_1 \nonumber \\
& = p^2\ek-(p+1)b_2+\mathfrak{b}(j)+\mathfrak{b}(r). \nonumber
\end{align}
Using $d_0\geq \frac{b_2-b_1}{p^2} $, $d_r\leq 
\frac{\mathfrak{b}(r) }{p^2} $ and 
$d_j\leq \frac{\mathfrak{b}(j)}{p^2} $ we find that
$$v_2\left(\pk^{d_0-d_j}
\Psi^{(j)}\rho_r\right)\geq p^2\ek-pb_2-b_1. $$
It is clear that if (\ref{second new bound}) is satisfied then
$p^2\ek-pb_2-b_1>0. $\\
Case 2: Assume $r_{(0)}+j_{(0)}\geq p$ and $r_{(1)}+j_{(1)}+1=p$.
Since $v_2(\Psi_1^p\rho)=2b_2 $ we see that
$$v_2\left(\Psi^{(r)}\Psi^{(j)}\rho
\right)\geq 2b_2+(r_{(1)}+j_{(1)})b_2+(j_{(0)}+r_{(0)}-p)pb_1=
\mathfrak{b}(r)+\mathfrak{b}(j)-p^2b_1. $$
Thus $v_2\left(\pk^{d_0-d_j}
\Psi^{(j)}\rho_r \right)\geq p^2d_0-p^2b_1>0. $
\end{proof}

\begin{proposition}\label{elements of assoc order}
Assume (\ref{second new bound}) is satisfied.
It is the case that
$$\pk^{-w_j}
\Psi^{(j)}\rho_r\in\mints $$
whenever $0\leq r,j<p^2.$
\end{proposition}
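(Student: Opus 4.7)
The plan is to split into two cases based on whether $j+r<p^2$ or $j+r\geq p^2$, and in each case reduce to one of the two previous propositions. The key arithmetic fact used in both cases is the observation immediately following the definition of $w_j$, namely $w_j\leq d_j-d_0$, which rephrased says $v_0(\pk^{-w_j})=-w_j\geq d_0-d_j$—exactly the hypothesis on $c$ in Proposition \ref{new congruence} and also the exponent appearing in Proposition \ref{big enough valuation}.

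For the case $j+r<p^2$, I would apply Proposition \ref{new congruence} with $c=\pk^{-w_j}$ to obtain
\[
\pk^{-w_j}\Psi^{(j)}\rho_r\equiv \pk^{d_{j+r}-d_r-w_j}\rho_{j+r}\pmod{\mmax^{p^2\ek-pb_2-(p^2-p+1)b_1}}.
\]
The main term lies in $\mints$ since $a=r$ is an admissible choice in the minimum defining $w_j$, so $w_j\leq d_{j+r}-d_r$, and $v_2(\rho_{j+r})=r(\mathfrak{b}(j+r))\geq 0$. For the error term, one needs the modulus exponent to be non-negative, i.e.\ $p^2\ek\geq pb_2+(p^2-p+1)b_1$. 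This follows by chaining (\ref{second new bound}) with (\ref{lower breaks}): hypothesis (\ref{second new bound}) gives $p^2\ek>(p+1)b_2-(p-1)b_1$, and the desired inequality $(p+1)b_2-(p-1)b_1\geq pb_2+(p^2-p+1)b_1$ is equivalent to $b_2\geq p^2b_1$, which holds by (\ref{lower breaks}).

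For the case $j+r\geq p^2$, I would invoke Proposition \ref{big enough valuation} directly: it yields $\pk^{d_0-d_j}\Psi^{(j)}\rho_r\in\mmax$. Multiplying through by $\pk^{d_j-d_0-w_j}$, whose exponent is non-negative by the same observation $w_j\leq d_j-d_0$, gives $\pk^{-w_j}\Psi^{(j)}\rho_r\in\mmax\subseteq\mints$.

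I expect no genuine obstacle here: both cases repackage work already done in the preceding two propositions. The only subtlety worth flagging is the bookkeeping in Case 1, where one must verify that the error modulus from Proposition \ref{new congruence} is actually non-negative; this is the sole place where the newly imposed assumption (\ref{second new bound}) interacts with (\ref{lower breaks}), and it is essential that both are invoked together.
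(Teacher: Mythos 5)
Your proof is correct and follows essentially the same route as the paper: the case $j+r\geq p^2$ is handled exactly as in the paper via Proposition \ref{big enough valuation} together with $w_j\leq d_j-d_0$, and for $j+r<p^2$ your appeal to Proposition \ref{new congruence} with $c=\pk^{-w_j}$ (legitimate since $-w_j\geq d_0-d_j$) merely packages the carry/no-carry valuation estimates that the paper redoes by hand. Your check that the error modulus $p^2\ek-pb_2-(p^2-p+1)b_1$ is positive, obtained by combining (\ref{second new bound}) with $b_2>p^2b_1$ from (\ref{lower breaks}), is precisely the inequality the paper records afterwards as (\ref{its huge}).
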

\begin{proof}
First note that Proposition \ref{big enough valuation} implies that
$\pk^{-w_j}\Psi^{(j)}\rho_r\in\mints $ whenever
$j+r\geq p^2.$ So we may assume that $j+r<p^2.$ First suppose that there is not a carry when adding the $p$-adic expansions of $j$ and $r$. Then
$\pk^{-w_j}\Psi^{(j)}\rho_r=\pk^{-w_j-d_r}\Psi^{(j)}\Psi^{(r)}\rho.
$ Note that $w_j\leq d_{j+r}-d_r$ so 
$$v_2\left(\pk^{-w_j-d_r}\Psi^{(j)}\Psi^{(r)}\rho \right)
\geq v_2\left(\pk^{-d_{j+r}}\Psi^{(j)}\Psi^{(r)}\rho \right).$$
Since there is not a carry when adding the $p$-adic expansions
of $j$ and $r$ we get $\pk^{-d_{j+r}}\Psi^{(j)}\Psi^{(r)}\rho=
\rho_{j+r}$. Thus 
$$v_2\left(\pk^{-w_j}\Psi^{(j)}\rho_r \right)\geq r(j+r)\geq 0. $$

Now let $j=j_{(0)}+p j_{(1)}$ and $r=r_{(0)}+p r_{(1)}$. Assume
that $j_{(0)}+r_{(0)}\geq p $ and $j_{(1)}+r_{(1)}+1<p. $
Now $$\Psi^{(j)}\Psi^{(r)}\rho=
\Psi_1^{j_{(0)}+r_{(0)}-p}\Psi_2^{j_{(1)}+r_{(1)}}\Psi_1^p\rho. $$
Since $v_2(\Psi_1^p\rho)\geq 2b_2$ we see that
$$v_2\left(\Psi^{(j)}\Psi^{(r)}\rho \right)\geq 
2b_2+(j_{(1)}+r_{(1)})b_2+(j_{(0)}+r_{(0)}-p)pb_1=\mathfrak{b}(j+r). $$
Finally, since $d_{j+r}=\lfloor \frac{\mathfrak{b}(j+r)}{p^2}\rfloor $,
it follows that $$v_2\left(\pk^{-w_j}\Psi^{(j)}\rho_r\right)\geq
v_2\left(\pk^{-d_{r+j}}\Psi^{(j)}\Psi^{(r)}\rho \right)\geq 0. $$
\end{proof}
\begin{proposition}\label{second congruence}
Assume (\ref{second new bound}) is satisfied.
Let $0\leq j,r<p^2 $. Then
\begin{equation}\label{congruence 2} 
\pk^{-w_j}\Psi^{(j)}\rho_r\equiv \pk^{d_{j+r}-d_r-w_j}\rho_{j+r} 
\mod\mmax^{p^2\ek-pb_2-(p^2-p+1)b_1}
\end{equation}
with equality if there is not a carry when adding the $p$-adic expansions
of $j$ and $r$.
\end{proposition}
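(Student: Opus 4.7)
The plan is to split the argument according to whether $j+r<p^2$ or $j+r\geq p^2$, invoking Proposition \ref{new congruence} in the first case and reworking the valuation bounds from Proposition \ref{big enough valuation} in the second.

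When $j+r<p^2$, I would apply Proposition \ref{new congruence} with $c=\pk^{-w_j}\in\littlefield$. The hypothesis $v_0(c)\geq d_0-d_j$ becomes $w_j\leq d_j-d_0$, which is exactly the observation recorded immediately after the definition of $w_j$. The conclusion is then (\ref{congruence 2}) after multiplying out $c\cdot\pk^{d_{j+r}-d_r}=\pk^{d_{j+r}-d_r-w_j}$, and the equality clause under the no-carry hypothesis transfers verbatim from Proposition \ref{new congruence}.

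When $j+r\geq p^2$, the right-hand side of (\ref{congruence 2}) is $0$ because $\rho_{j+r}=0$, and a carry is forced, so no equality need be asserted; it remains to show $v_2(\pk^{-w_j}\Psi^{(j)}\rho_r)\geq p^2\ek-pb_2-(p^2-p+1)b_1$. Since $w_j\leq d_j-d_0$, we have $v_2(\pk^{-w_j})\geq v_2(\pk^{d_0-d_j})$, so it suffices to bound $v_2(\pk^{d_0-d_j}\Psi^{(j)}\rho_r)$ from below, and I would mirror the two-case split from Proposition \ref{big enough valuation}. If $j_{(1)}+r_{(1)}\geq p$, the estimate $p^2\ek-pb_2-b_1$ derived in Case 1 of that proposition is stronger than needed (since $b_1\leq(p^2-p+1)b_1$).

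The main obstacle is the remaining subcase $j_{(0)}+r_{(0)}\geq p$ with $j_{(1)}+r_{(1)}+1=p$, because the coarse bound $v_2(\Psi_1^p\rho)\geq 2b_2$ used in Proposition \ref{big enough valuation} is insufficient here. I would replace it with the finer congruence (\ref{sigma 1}), writing $\Psi^{(j)}\Psi^{(r)}\rho=\Psi_1^{j_{(0)}+r_{(0)}}\Psi_2^{p-1}\rho$ and peeling off one factor of $\Psi_1^p$ via (\ref{scaffold}) so that the dominant term becomes $\Psi_1^{j_{(0)}+r_{(0)}-p}\Psi_2^p\rho$, bounded below by $p^2\ek+b_2+(j_{(0)}+r_{(0)}-p)pb_1$ via (\ref{sigma 2}), with the error term controlled by (\ref{scaffold}). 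After multiplying by $\pk^{-w_j-d_r}$, expanding $\mathfrak{b}(j)+\mathfrak{b}(r)=(p+1)b_2+(j_{(0)}+r_{(0)})pb_1$ and using $d_0p^2\geq b_2-b_1$ (which holds since $d_0\geq m$ and $b_2=p^2m+b_1$), the required inequality reduces to $d_0p^2\geq b_2-b_1$, completing the argument.
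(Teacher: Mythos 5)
Your proposal is correct and follows essentially the same route as the paper: reduce the case $j+r<p^2$ to Proposition \ref{new congruence} via $-w_j\geq d_0-d_j$, and for $j+r\geq p^2$ (where $\rho_{j+r}=0$ and a carry is forced) split on whether the carry occurs in the $p$'s digit, using (\ref{sigma 2}) in the first subcase and the refined congruences (\ref{sigma 1})/(\ref{scaffold}) together with (\ref{sigma 2}) in the second. Your final reduction to $p^2d_0\geq b_2-b_1$ checks out and matches the paper's use of $-w_j\geq m-\mathfrak{b}(j)/p^2$ and $-d_r\geq-\mathfrak{b}(r)/p^2$.
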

\begin{proof}
Let  $j=j_{(0)}+p j_{(1)}$ and $r=r_{(0)}+p r_{(1)} $ be the $p$-adic expansions
of $j$ and $r$ respectively. Similar to what we saw in
Proposition \ref{new congruence} we see that
that equality holds
in (\ref{congruence 2})
when there is not a carry when adding the $p$-adic expansions of
$j$ and $r$. Moreover, if $j+r<p^2 $ then (\ref{congruence 2})
is implied by Proposition \ref{new congruence} since
$-w_j\geq d_0-d_j. $ So we may assume $j+r\geq p^2$. This leaves
us with two cases.

Case 1: Assume $j_{(1)}+r_{(1)}\geq p $.
Now $\rho_{j+r}=0 $. Consider 
$$\pk^{-w_j}\Psi^{(j)}\rho_r=\pk^{-d_r-w_j}\Psi^{(j)}\Psi^{(r)}\rho=
\pk^{-d_r-w_j}\Psi_1^{j_{(0)}+r_{(0)}}\Psi_2^{j_{(1)}+r_{(1)}-p}\Psi_2^p\rho. $$
It follows from (\ref{sigma 2}) and Corollary 
\ref{scaffold applied to valuation criterion} that
$$\Psi_1^{j_{(0)}+r_{(0)}}\Psi_2^{j_{(1)}+r_{(1)}-p}\Psi_2^p\rho\equiv 
0\mod \mmax^{p^2\ek-(p-1)b_2+(j_{(1)}+r_{(1)})b_2+(j_{(0)}+r_{(0)})pb_1}. $$
Now $p^2\ek-(p-1)b_2(j_{(1)}+r_{(1)})b_2+(j_{(0)}+r_{(0)})pb_1=
p^2\ek-(p+1)b_2+\mathfrak{b}(j)+\mathfrak{b}(r)$.
Using the bounds $-d_r\geq -\frac{\mathfrak{b}(r)}{p^2}  $
we see that
\begin{alignat}{3}
\pk^{-d_r-w_j}\Psi^{(j)}\Psi^{(r)}\rho & \equiv 0 &&
\mod \mmax^{p^2\ek-(p+1)b_2+p^2m} \nonumber \\
& \equiv 
0 && \mod \mmax^{p^2\ek-pb_2-b_1}. \nonumber 
\end{alignat}

Case 2: Assume $j_{(0)}+r_{(0)}\geq p $ and $j_{(1)}+r_{(1)}=p-1 $.
Again, $\rho_{j+r}=0 $. Consider
$$
\Psi^{(j)}\Psi^{(r)}\rho =
\Psi_1^{j_{(0)}+r_{(0)}-p}\Psi_2^{j_{(1)}+r_{(1)}}\Psi_1^p\rho
\equiv 
0 \mod \mmax^{p^2\ek-(p-1)b_2-p(p-1)b_1+(j_{(1)}+r_{(1)})b_2+(j_{(0)}+r_{(0)})pb_1}.
$$
Now $p^2\ek-(p-1)b_2-p(p-1)b_1+(j_{(1)}+r_{(1)})b_2+(j_{(0)}+r_{(0)})pb_1=
p^2\ek-(p+1)b_2-p(p-1)b_1+\mathfrak{b}(j)+\mathfrak{b}(r)$ and again
$-d_r\geq -\frac{\mathfrak{b}(r)}{p^2} $.
Hence
\begin{alignat}{3}
\pk^{-d_r-w_j}\Psi^{(j)}\Psi^{(r)}\rho & \equiv 0 &&
\mod \mmax^{p^2\ek-(p+1)b_2-p(p-1)b_1+p^2m} \nonumber \\
& \equiv 
0 && \mod \mmax^{p^2\ek-pb_2-(p^2-p+1)b_1}. \nonumber 
\end{alignat}
Clearly $p^2e_K-pb_2-(p^2-p+1)b_1<p^2\ek-pb_2-b_1 $ and so the claim holds.
\end{proof}
Notice that (\ref{second new bound}) can be restated as
$\ek>u_2+p^{-2}(b_2-(p^2-1)b_1)=u_2+p^{-2}b_1+p^{-2}(b_2-p^2b_1) $.
Hence (\ref{second new bound}) implies that 
$\ek\geq u_2+p^{-2}b_1+1$ since
$b_2>p^2b_1 $. Moreover, since
$p\nmid b_1$ we see that (\ref{second new bound}) implies that
$\ek>u_2+p^{-2}b_1+1$. Thus
(\ref{second new bound}) implies that
\begin{equation}\label{its huge}
    p^2\ek-pb_2-(p^2-p+1)>p^2.
\end{equation}
Now we arrive at our main result:
\begin{theorem}\label{main result} Let $\bigfield/\littlefield$ be the 
$C_{p^2}$-extension constructed in Section 3 using 
Choices \ref{first choice} and \ref{second choice}. Assume
further that the lower ramification numbers $b_1$ and $b_2$
satisfy
(\ref{second new bound}).\\
(a) The associated order $\asscorder$ of $\mints$
has $\kints $ basis $\{\pk^{-w_j}\Psi^{(j)}\}_{j=0}^{p^2-1}$.\\
(b) If $w_j=d_j-d_0$ for all $0\leq j\leq p^2-1$, then
$\mints$ is free over $\asscorder$; moreover,
$\mints=
\asscorder\cdot\rz$.\\
(c) Conversely, if $\mints$ is free over $\asscorder$ then
$w_j=d_j-d_0$ for all $0\leq j\leq p^2-1.$
\end{theorem}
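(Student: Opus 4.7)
The plan is to handle (a), (b), (c) in sequence, reducing (c) to an upper-triangular matrix computation modulo $\pk$.

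For (a), the inclusion $\pk^{-w_j}\Psi^{(j)} \in \asscorder$ is Proposition \ref{elements of assoc order} applied to the $\kints$-basis $\{\rho_r\}$ of $\mints$, and $\littlefield$-linear independence comes from the fact, noted after Theorem \ref{create the scaffold}, that $\{\Psi^{(j)}\}_{0\le j<p^2}$ is a $\littlefield$-basis of $\littlefield[G]$. For spanning, suppose $z = \sum_j c_j\Psi^{(j)} \in \asscorder$ with $c_j \in \littlefield$, and let $j^*$ minimize $v_0(c_j)+w_j$. Assuming for contradiction that $v_0(c_{j^*})+w_{j^*} < 0$, pick $a^*$ realizing $w_{j^*} = d_{j^*+a^*}-d_{a^*}$ and expand $z\rho_{a^*}$ in the basis $\{\rho_b\}$. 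By Proposition \ref{second congruence} combined with (\ref{its huge}), the leading contribution of $c_{j^*}\Psi^{(j^*)}\rho_{a^*}$ to the $\rho_{j^*+a^*}$ coefficient is $c_{j^*}\pk^{w_{j^*}}$, of $v_0$ exactly $v_0(c_{j^*})+w_{j^*}$, while every other contribution (both error terms and $k\neq j^*$ cases) has $v_0 \ge v_0(c_k)+w_k+1 > v_0(c_{j^*})+w_{j^*}$. Hence this coefficient has negative $v_0$, contradicting $z\rho_{a^*}\in\mints$.

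Part (b) is a direct computation: from $\rho_0 = \pk^{-d_0}\rho$ and $\rho_j = \pk^{-d_j}\Psi^{(j)}\rho$ we obtain the \emph{exact} equality $\Psi^{(j)}\rho_0 = \pk^{d_j-d_0}\rho_j$, so the hypothesis $w_j = d_j-d_0$ gives $\pk^{-w_j}\Psi^{(j)}\rho_0 = \rho_j$ for every $j$. The $\kints$-basis of $\asscorder$ acting on $\rho_0$ thus recovers the $\kints$-basis $\{\rho_j\}$ of $\mints$, so $\asscorder\cdot\rho_0 = \mints$.

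For (c), suppose $\mints = \asscorder\cdot\alpha$, write $\alpha = \sum_r t_r\rho_r$ with $t_r \in \kints$, and consider the $\kints$-linear map $\phi:\asscorder \to \mints$, $y\mapsto y\alpha$. This is a surjection between free $\kints$-modules of rank $p^2$, hence an isomorphism, and its matrix $M=(M_{bj})$ in the bases $\{\pk^{-w_j}\Psi^{(j)}\}$ and $\{\rho_b\}$ satisfies $\det M\in\kints^\times$. Proposition \ref{second congruence} together with (\ref{its huge}) --- which ensures the error term in \ref{second congruence} has $v_2>p^2$ and therefore, when expanded in $\{\rho_b\}$, contributes coefficients in $\pk\kints$ --- yields
$$\overline{M}_{bj} = \begin{cases} \bar t_{b-j}, & \text{if } b\ge j,\; b-j<p^2,\; d_b-d_{b-j}=w_j, \\ 0, & \text{otherwise.}\end{cases}$$
The condition $b\ge j$ makes $\overline{M}$ upper triangular, with diagonal entry $\overline{M}_{jj}=\bar t_0$ when $d_j-d_0=w_j$ and $0$ otherwise. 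Hence $\det\overline{M}\ne 0$ forces both $t_0\in\kints^\times$ and $w_j=d_j-d_0$ for every $0\le j<p^2$.

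The main technical obstacle is carefully controlling the error and ``carry'' terms from Proposition \ref{second congruence} in part (a), so that the leading contribution of $c_{j^*}$ to the $\rho_{j^*+a^*}$ coefficient genuinely stands alone; the bound (\ref{its huge}), which guarantees the error improves each contribution's $v_0$ by at least $1$, is precisely what makes this work. Once that residue-field picture is in place, (c) reduces to reading off the diagonal of an upper-triangular matrix.
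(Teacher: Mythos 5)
Your proposal is correct and follows essentially the same route as the paper: part (a) extracts integrality of the $\rho_{j+a}$-coefficients of $\alpha\rho_a$ from the congruence propositions (you organize this as a minimal-counterexample argument where the paper bootstraps from the $a=0$ case, but the content is the same), part (b) is the same exact computation $\pk^{-w_j}\Psi^{(j)}\rho_0=\rho_j$, and part (c) is the same triangular change-of-basis determinant argument, up to a transposed indexing convention.
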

\begin{proof}
(a) First notice that Proposition \ref{elements of assoc order} implies that
$\pk^{-w_j}\Psi^{(j)}\in\asscorder $ since $\{\rho_a\}_{a=0}^{p^2-1} $
is an $\kints$-basis for $\mints$.
Now given an element 
$\alpha\in \littlefield[G] $ we may write
$\alpha=\displaystyle\sum_{j=0}^{p^2-1}c_j\Psi^{(j)} $ with $c_j\in \littlefield$.
Since $\{\rho_a\}_{a=0}^{p^2-1} $ is a $\kints$-basis for $\mints$
we find that $\alpha\in \asscorder$ is equivalent to $\alpha\rho_a=
\displaystyle\sum_{j=0}^{p^2-1}c_j\Psi^{(j)}\rho_a\in 
\mints $ for all $0\leq a<p^2$.

Consider the case $a=0$. We have 
$$\sum_{j=0}^{p^2-1}c_j\Psi^{(j)}\rho_0=\sum_{j=0}^{p^2-1}c_j\pk^{d_j-d_0}\rho_j.$$
Since this is an element of $\mints$ it follows that
$v_0\left(c_j\pk^{d_j-d_0}\right)\geq 0$ for each $0\leq j<p^2.$

It suffices to show that $w_j\geq -v_0(c_j)$ whenever 
$0\leq j\leq p^2-1$.
To this end observe that
if $a+j\geq p^2$ then Proposition \ref{big enough valuation} implies that 
$c_j\Psi^{(j)}\rho_a\in \mints $ since 
$v_0(c_j)\geq d_0-d_j.$ for all $0\leq j\leq p^2-1.$
Now it follows from Proposition \ref{new congruence} and (\ref{its huge}) that
$$\alpha\rho_a \equiv 
\sum_{0\leq j< p^2-a}c_j\Psi^{(j)}\rho_a
 \equiv \sum_{0\leq j<p^2-a}c_j\pk^{d_{j+a}-d_a}\rho_{j+a} 
 \mod \mints.$$
Since  
$\alpha\rho_a\in\mints$
it follows that $c_j\pk^{d_{j+a}-d_a}\in\kints $ whenever $a+j<p^2$.
This means that $d_{j+a}-d_a\geq -v_0(c_j)$ whenever $0\leq a<p^2-j$
and so $w_j\geq -v_0(c_j) $.\\
(b)  Suppose that $w_j=d_j-d_0 $ for all $j$. Consider
$$\pk^{-w_j}\Psi^{(j)}\rz=
\pk^{d_0-d_j}\Psi^{(j)}\pk^{-d_0}\Psi^{(0)}\rho=
\pk^{-d_j}\Psi^{(j)}\rho=\rho_j. $$
Hence the $\kints$-basis 
$\{\pk^{-w_j}\Psi^{(j)} \}_{j=0}^{p^2-1} $ of $\asscorder$
takes $\rz$ to the basis $\{\rho_j\}_{j=0}^{p^2-1} $ of
$\mints$. Thus $\mints$
is a free $\asscorder$-module.\\
(c) Assume that $\mints=\asscorder\cdot\nu$ for some
$\nu\in\mints$. Since $\{\rho_r \}_{r=0}^{p^2-1}$ is an
$\kints$-basis for $\mints$, we may write
$\nu=\displaystyle\sum_{r=0}^{p^2-1}x_r\rho_r $
with $x_r\in\kints$.
Now $\{\pk^{-w_j}\Psi^{(j)}\nu \}_{j=0}^{p^2-1} $ is also
an $\kints$-basis for $\mints$.
It follows from Proposition \ref{second congruence} that 
$$\pk^{-w_j}\Psi^{(j)}\nu=\sum_{r=0}^{p^2-1}
x_r\pk^{-w_j}\Psi^{(j)}\rho_r\equiv
\sum_{r=0}^{p^2-1}x_r\pk^{d_{j+r}-d_{r}-w_j}\rho_{j+r}\mod\mmax^{p^2\ek-pb_2-(p^2-p+1)b_1}.$$
Hence (\ref{its huge}) implies that 
$$\pk^{-w_j}\Psi^{(j)}\nu\equiv
\sum_{r=0}^{p^2-1}x_r\pk^{d_{j+r}-d_{r}-w_j}\rho_{j+r}
\mod \pk\mmax.$$
For $0\leq i,j\leq p^2-1$ let $$b_{ij}=\left\{\begin{array}{cc} 
x_{j-i}\pk^{d_{j}-d_{j-i}-w_{i}}, & i\leq j\\
0, & i>j. \end{array}\right. $$
Let $B:=(b_{ij})_{0\leq i,j\leq p^2-1}$. Let $A\in M_{p^2}(\kints)$ be the change of coordinates matrix
taking the $\kints$-basis 
$\{\rho_j\}_{j=0}^{p^2-1} $ to the $\kints$-basis
$\{\pk^{-w_j}\Psi^{(j)}\nu\}_{j=0}^{p^2-1} $, 
say $A=(a_{ij})_{0\leq i,j\leq p^2-1}$.
We see that $a_{ij}\equiv b_{ij} \mod \kmax $ and so
$\det(A)\equiv \det(B)\mod\kmax$. 
Since $B$ is upper-triangular we see that $$\det(B)=
x_0^{p^2}\prod_{j=0}^{p^2-1}\pk^{d_j-d_0-w_j}.$$
We also know that 
$v_0(\det(A))=0 $. Thus
$$0=v_0(\det(B))=p^2v_0(x_0)+\sum_{j=0}^{p^2-1}d_j-d_0-w_j. $$
Since each term in the sum is non-negative we conclude that
$w_j=d_j-d_0 $ for each $0\leq j\leq p^2-1$.
\end{proof}

As a Corollary to this we get:
\begin{corollary} Let $\bigfield/\littlefield$ satisfy the conditions
of Theorem \ref{main result}.
Let $r(b_2)$ be the least non-negative residue of $b_2$ modulo $p^2$.
Let $\asscorder=\{\alpha\in \littlefield[G]:\alpha\mints\subseteq 
\mints\}$ be the associated order of $\mints$.
Then $\mints $ is free over $\asscorder $ if and only if
$r(b_2)\mid p^2-1 $. Furthermore, if $\mints$ is free over
$\asscorder$ then $\mints=\asscorder\cdot\rz$ for any
$\rz\in\bigfield$ such that $v_2(\rz)=r(b_2)$.
\end{corollary}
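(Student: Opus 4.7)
The plan is to combine Theorem \ref{main result} with an arithmetic analysis of the condition $w_j=d_j-d_0$, following the template of [BE13]. Theorem \ref{main result} tells us that $\mathfrak{O}_2$ is free over $\mathfrak{A}_{K_2/K_0}$ if and only if $w_j=d_j-d_0$ for every $0\leq j<p^2$, and that in this case $\mathfrak{O}_2=\mathfrak{A}_{K_2/K_0}\cdot\rho_0$. So it suffices to prove that this condition on the $w_j$'s is equivalent to $r(b_2)\mid p^2-1$.

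First I would observe that, since $b_1\equiv b_2\equiv b\pmod{p^2}$ with $b=r(b_2)$, the residue $r_a:=r(\mathfrak{b}(a))$ equals $b(1+a_{(1)}+pa_{(0)})\bmod p^2$; in particular it depends only on $b$ and the base-$p$ digits of $a$, not on $b_1$ and $b_2$ individually. I would then split the inequality $d_{j+a}-d_a\geq d_j-d_0$ (which must hold for all $0\leq a<p^2-j$) into two cases according to whether a carry occurs when adding $j$ and $a$ in base $p$. In the no-carry case, $\mathfrak{b}(j+a)-\mathfrak{b}(a)=\mathfrak{b}(j)-\mathfrak{b}(0)$ and the inequality simplifies to $r_j+r_a\geq b$ (using $r_{j+a}\equiv r_j+r_a-b\pmod{p^2}$, so that the congruence jumps by $p^2$ precisely when $r_j+r_a<b$). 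In the case of a carry in the first position only, a direct computation using $\mathfrak{b}(j+a)-\mathfrak{b}(a)-\mathfrak{b}(j)+\mathfrak{b}(0)=b_2-p^2 b_1$ together with the bound $p^2m>(p^2-1)b_1$ from Choice \ref{second choice} shows that $d_0\geq b_1$, and hence the inequality holds automatically. Thus $w_j=d_j-d_0$ for all $j$ is equivalent to the purely arithmetic condition that $r_j+r_a\geq b$ for every carry-free pair $(j,a)$.

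The main obstacle is proving this arithmetic condition is equivalent to $b\mid p^2-1$. For the direction ``$b\nmid p^2-1$ implies failure,'' I would exhibit an explicit carry-free pair $(j,a)$ with $r_j+r_a<b$; because our residues have the form $b\cdot s\bmod p^2$ with $s\in\{1,\dots,p^2\}$, this is precisely the combinatorial setup of [BE13] and their construction adapts verbatim. For the converse, assuming $p^2-1=bq$, I would organize the residues $r(bk)$ for $k=1,\dots,p^2$ into arithmetic belts of length $q$ and verify by case analysis on which belts contain $s_j$ and $s_a$ that $r_j+r_a\geq b$ for every carry-free pair. The delicate bookkeeping is in tracking how the no-carry condition on $(j,a)$ translates through the digit-swap $a\mapsto 1+a_{(1)}+pa_{(0)}$, and in controlling the boundary where $bs_a$ first crosses $p^2$.

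Finally, the ``moreover'' statement is immediate from Theorem \ref{main result}(b): whenever freeness holds, the same theorem gives $\mathfrak{O}_2=\mathfrak{A}_{K_2/K_0}\cdot\rho_0$ for any $\rho_0\in K_2$ with $v_2(\rho_0)=r(b_2)$.
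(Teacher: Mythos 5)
Your proposal is correct and takes essentially the same route as the paper: the paper's entire proof of this corollary is the single sentence ``Follows the proof of Theorem 1.1 in [BE13] given on pages 3604--3606,'' and your argument is exactly that reduction --- apply Theorem \ref{main result} to replace freeness by the condition $w_j=d_j-d_0$, and then transplant the carry/no-carry residue analysis of [BE13] to show this is equivalent to $r(b_2)\mid p^2-1$. (One small slip: in the carry case the intermediate claim ``$d_0\geq b_1$'' is not what you want; the point is that $d_{j+a}-d_a-d_j+d_0$ is an integer equal to $\bigl(b_2-p^2b_1+r_j+r_a-b-r_{j+a}\bigr)/p^2>-1$ by (\ref{second bound}), hence nonnegative --- but this is exactly the computation in [BE13] and does not affect the argument.)
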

\begin{proof}
Follows the proof of Theorem 1.1 in [BE] given on pages 3604-3606.
\end{proof}
\section{An Example}
Let $\littlefield=\mathbb{Q}_3(\sqrt[6]{3})$. Let $\bigfield=\littlefield(x_1,x_2)$ where
$(x_1^3,x_2^3)=(x_1,x_2)\oplus (\pk^{-1},\pk^{-4}). $
That is, $x_1^3-x_1=\pk^{-1} $ and $x_2^3-x_2=\pk^{-4}-\pk^{-2}x_1-\pk^{-1}x_1^2 $.
Here $p=3$, $\ek=6$, $a_1=\pk^{-1}$, $a_2=\pk^{-4}$ and $b_1=1=m$.
We verify the following:
\begin{enumerate}
\item $-\frac{p}{p^2-1}\ek=-\frac{18}{8}<-1=v_0(a_1)$
\item $v_0(a_1)+\frac{p-1}{p}b_1=-\frac{10}{3}<-3=v_0(a_1^p) $
\item $p^2\ek=54>38=(p+1)b_2-(p-1)b_1 $
\item $\ek=6>\frac{46}{9}=u_2+p^{-2}b_1+1 $.
\end{enumerate}
So $\bigfield/K $ is a totally ramified, 
cyclic extension of degree $p^2$ which 
has a Galois scaffold. Here
$r(b)=1\mid p^2-1$ so Theorem 4.3 tells us that
$\mints=\asscorder\cdot\pim $.
We will verify this.

For $0\leq a\leq 8 $ we see that
$$\mathfrak{b}(a)=\left\{\begin{array}{ccc}
1, & \mbox{if} & 0\leq a\leq 2\\
2, & \mbox{if} & 3\leq a\leq 5\\
3, & \mbox{if} & 6\leq a\leq 7\\
4, & \mbox{if} & a=8
\end{array}\right. $$
and one can verify that
$$w_j=\left\{\begin{array}{ccc}
0, & \mbox{if} & 0\leq j\leq 2\\
1, & \mbox{if} & 3\leq j\leq 5\\
2, & \mbox{if} & 6\leq j\leq 7\\
3, & \mbox{if} & j=8.
\end{array}\right. $$
Notice that $w_j=d_j-d_0$ for $0\leq j\leq 8$. We know that
$\mathcal{S}:=\{\pk^{-w_j}\Psi^{(j)}\}_{0\leq j\leq 8}$
is an $\kints$-basis for
$\asscorder$.
We see that $\mathcal{S}=\{1,\Psi_1,\Psi_1^2, \pk^{-1}\Psi_2,
\pk^{-1}\Psi_1\Psi_2,\pk^{-1}\Psi_1^2\Psi_2,
\pk^{-2}\Psi_2^2,\pk^{-2}\Psi_1\Psi_2^2,
\pk^{-3}\Psi_1^2\Psi_2^2\}$. Now we consider 
$\left\{v_2\left(\pk^{-w_j}\Psi^{(j)}\pim\right)\right\}_{0\leq j\leq 8} $. If it
is a full set of residues modulo $9$ then $\pim$ is a free
generator for $\mints$ over $\asscorder$.
Indeed:
$$\begin{array}{ccc}v_2(\pk^{-3}\Psi_1^2\Psi_2^2\pim)=0 &
v_2(1\pim)=1 & v_2(\pk^{-1}\Psi_2\pim)=2\\
v_2(\pk^{-2}\Psi_2^2\pim)=3 &
v_2(\Psi_1\pim)=4 & v_2(\pk^{-1}\Psi_1\Psi_2\pim)=5\\
v_2(\pk^{-2}\Psi_1\Psi_2^2\pim)=6 &
v_2(\Psi_1^2\pim)=7 & v_2(\pk^{-1}\Psi_1^2\Psi_2\pim)=8
\end{array}$$ 

\newpage 
\noindent\textbf{References}\\
\noindent [BCE] Nigel P. Byott, Lindsay N. Childs,
G. Griffith Elder, Scaffolds and Generalized Integral Module Structure, Tome 68, no 3 (2018) 965-1010.\\
\noindent [BE13] Nigel P. Byott, G. Griffith. Elder, Galois scaffolds and Galois module structure in extensions of characteristic $p$ local fields of degree $p^2$, Journal of Number Theory 133 (2013) 3598-3610.\\
\noindent [BE18] N. P. Byott, G. G. Elder, 
Sufficient Conditions For Large Galois Scaffolds,
Journal of Number Theory 182 (2018) 95-130.\\
\noindent [E] G. Griffith Elder, A valuation criterion for normal basis generators in local fields of characteristic $p$, Arch. Math. 94 (2010), 43-47.\\
\noindent [FV] I.B. Fesenko, S. V. Vostokov, Local Fields and 
Their Extensions (2nd edition) Translations of mathematical monographs. 1993. American Mathematical Society.\\
\noindent [H] O. Hyodo, Wild Ramification in the Imperfect Residue Field Case,
Advanced Studies in Mathematics 12, 1987, Galois Representations and Arithmetic 
Algebraic Geometry pp. 287-314.\\
\noindent [MW] R. E. Mackenzie, G. Whaples,
Artin-Schreier Equations in Characteristic Zero, 
American Journal of Mathematics, Vol 78, No. 3 (July 1956), pp. 473-485.
The Johns Hopkins University Press.\\
\noindent [VZ] S. V. Vostokov, I. B. Zhukov, Some approaches to the construction of abelian extensions for $p$-adic fields, Amer. Math Soc. Transl. (2) Vol. 166, 1995.\\

\end{document}